\newtheorem{theorem}{Theorem}[section]
\newtheorem{corollary}{Corollary}[section]
\newtheorem{definition}{Definition}[section]
\newtheorem{example}{Example}[section]
\newtheorem{lemma}{Lemma}[section]
\newtheorem{proposition}{Proposition}[section]
\newtheorem{remark}{Remark}[section]
\author{
        Dhriti Sundar Patra\footnote{Department of Mathematics, University of Haifa, Mount Carmel, 3498838 Haifa, Israel
       \newline e-mail:
       {\tt dhritimath@gmail.com}}
        \ and \
       Vladimir Rovenski\footnote{Department of Mathematics, University of Haifa, Mount Carmel, 3498838 Haifa, Israel
       \newline e-mail: {\tt vrovenski@univ.haifa.ac.il}
       }
}
\title{On the rigidity of the Sasakian structure and characterization of cosymplectic manifolds}
\begin{document}

\date{}

\maketitle

\begin{abstract}
We introduce new metric structures on a smooth manifold (called ``weak" structures)
that generalize the almost contact, Sasakian, cosymplectic, etc. metric structures $(\varphi,\xi,\eta,g)$
and allow us to take a fresh look at the classical theory.
We demonstrate this statement by generalizing several well-known results.
We prove that any Sasakian structure is rigid, i.e., our weak Sasakian structure is homothetically equivalent to a~Sasakian structure.
We show that a weak almost contact structure with parallel tensor $\varphi$ is a weak cosymplectic structure
and give an example of such a structure on the product of manifolds.
We find conditions for a vector field to be a weak contact infinitesimal transformation.

\vskip1.5mm\noindent
\textbf{Keywords}: Almost contact metric structure, Sasakian structure, cosymplectic structure, contact vector field

\vskip1.5mm
\noindent
\textbf{Mathematics Subject Classifications (2010)} 53C15, 53C25, 53D15
\end{abstract}




\section*{Introduction}

The methods of contact geometry play an important role in modern mathematics and theoretical physics.
For a review of the theory of contact metric structure and related structures, see \cite{blair2010riemannian,bg,Cappelletti-Montano2013,sasaki1965almost}.
There are inclusions $C_3\subset C_2\subset C_1$ and $C_4\subset C_1$ of well-known classes of metric structures on a manifold: almost contact $C_1$, contact $C_2$, Sasakian $C_3$ and cosymplectic $C_4$, respectively.

We introduce and study new metric structures on a smooth manifold (called ``weak" structures, see particular case in \cite{RWo-2} and \cite[Section~5.3.8]{Rov-Wa-2021})
that generalize the almost contact, Sasakian, cosymplectic, etc. metric structures $(\varphi,\xi,\eta,g)$
and allow us to take a fresh look at the theory of these classical structures.
We demonstrate this statement by generalizing several well-known results
on Sasakian and cosymplectic metric structures and contact vector fields.

Our metric structures: weak almost contact $C^{\,w}_1$, weak contact $C^{\,w}_2$, weak Sasakian $C^{\,w}_3$ and weak cosymplectic $C^{\,w}_4$,
form wider classes than the classical structures, i.e., their identity opera\-tor is replaced by a nonsingular $(1,1)$-tensor $Q$, see \eqref{2.1} in what follows.
Using the homothetical equiva\-lence relation, we define the equivalence classes $C^{\,w}_{i/\sim}$ of our structures with natural inclusions
$C^{\,w}_{4/\sim}\subset C^{\,w}_{1/\sim}$ and $C^{\,w}_{3/\sim}\subset C^{\,w}_{2/\sim}\subset C^{\,w}_{1/\sim}$.
Any classical structure $C_{i}$ mentioned above belongs to a family of weak structures that are homothetically equivalent to the classical structure; thus,
$C_i\subset C^{\,w}_{i/\sim}$.
A~natural question arises: \textit{How rich are these new structures $C^{\,w}_{i/\sim}$ compared to the classical ones~$C_{i}$}?
In~this article, we answer this question for the weak Sasakian and weak cosymplectic structures.
The study of the problem for weak contact metric manifolds and further generalization of classical results on contact metric manifolds postponed to the future.

This article consists of an introduction and six sections.
In Section~\ref{sec:1}, we define new metric structures $C^{\,w}_{i}$ and their homothetical equivalence classes $C^{\,w}_{i/\sim}$.
In Sections~\ref{sec:2} and \ref{sec:3a}, we study a weak contact metric structure and the tensor field $h$.
In Section~\ref{sec:3} we show that any weak Sasakian structure is homothetically equivalent to a Sasakian structure: $C_3= C^{\,w}_{3/\sim}$,
that is Sasakian structures are rigid in some sense.
In Section~\ref{sec:4}, we study $C^{\,w}_{4}$ and
show that a weak almost contact metric structure with parallel tensor $\varphi$ is a weak cosymplectic structure
and give an example of such a structure on the product of an even-dimensional manifold with a line or a~circle.
In Section~\ref{sec:5}, we find conditions for a vector field to be a weak contact vector field.

\section{Preliminaries}
\label{sec:1}

Here, we define new metric structures that generalize the almost contact metric structure.
A~\textit{weak almost contact structure} on a smooth odd-dimensional manifold $M^{2n+1}$ is a set $(\varphi,Q,\xi,\eta)$,
where $\varphi$ is a $(1,1)$-tensor, $\xi$ is the characteristic vector field and $\eta$ is a dual 1-form, satisfying
\begin{equation}\label{2.1}
\varphi^2 = -Q + \eta\otimes Q\,\xi, \quad \eta(\xi)=1,
\end{equation}
and $Q$ is a nonsingular $(1,1)$-tensor field such that
\begin{equation}\label{2.1Q-nu}
 Q\,\xi=\nu\,\xi
\end{equation}
for a positive constant $\nu$, see \cite{RWo-2} and \cite[Section~5.3.8]{Rov-Wa-2021} where $\nu=1$.
By $\eta(\xi)=1$, the form $\eta$ determines a smooth $2n$-dimensional contact distribution ${\cal D}:=\ker\eta$,
defined by the subspaces ${\cal D}_m=\{X\in T_m M: \eta(X)=0\}$ for $m\in M$.
We assume that the distribution
${\cal D}$ is $\varphi$-invariant,
\begin{equation}\label{2.1-D}
 \varphi X\in{\cal D},\quad X\in{\cal D},
\end{equation}
as in the classical theory of almost contact structure \cite{blair2010riemannian}, where $Q={\rm id}_{\,TM}$.
By \eqref{2.1} and \eqref{2.1-D}, the distribution ${\cal D}$ is invariant for $Q$: $Q({\cal D})={\cal D}$.
 If~there is a Riemannian metric $g$ on $M$ such that
\begin{align}\label{2.2}
 g(\varphi X,\varphi Y)= g(X,Q\,Y) -\eta(X)\,\eta(Q\,Y),\quad X,Y\in\mathfrak{X}_M,
\end{align}
then $(\varphi,Q,\xi,\eta,g)$ is called a {\it weak almost contact metric structure} on $M$ and $g$
is called a \textit{compatible} metric.
A weak almost contact manifold $M(\varphi,Q,\xi,\eta)$ endowed with a compatible Riemannian metric $g$
is said to be a \textit{weak almost contact metric manifold} and is denoted by $M(\varphi,Q,\xi,\eta,g)$.

Putting $Y=\xi$ in \eqref{2.2} and using $Q\,\xi=\nu\,\xi$ and $\nu\ne0$, we get, as in the classical theory,
\begin{align}\label{2.2A}
 \eta(X)=g(X,\xi).
\end{align}
In particular, $\xi$ is $g$-orthogonal to ${\cal D}$ for any compatible metric $g$.

\begin{remark}\rm
According to \cite{RWo-2}, a weak almost contact structure admits a compatible metric if $\varphi$ in \eqref{2.1}--\eqref{2.1-D}
has a skew-symmetric representation, i.e., for any $x\in M$ there exist a neighborhood $U_x\subset M$ and a~frame $\{e_i\}$ on $U_x$,
for which $\varphi$ has a skew-symmetric matrix.
\end{remark}

The following statement (a) generalizes \cite[Theorem~4.1]{blair2010riemannian}.

\begin{proposition}
{\rm (a)} For a weak almost contact structure on $M$, the tensor $\varphi$ has rank $2n$ and
\[
 \varphi\,\xi=0,\quad \eta\circ\varphi=0,\quad [Q,\,\varphi]=0.
\]
{\rm (b)} For a weak almost contact metric structure, $\varphi$ is skew-symmetric and $Q$ is self-adjoint,
\begin{equation}\label{E-Q2-g}
 g(\varphi X, Y) = -g(X, \varphi Y),\quad
 g(QX,Y)=g(X,QY).
\end{equation}
\end{proposition}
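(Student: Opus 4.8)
The plan is to establish part (a) purely algebraically from \eqref{2.1} and \eqref{2.1Q-nu}, and then to bootstrap part (b) from (a) together with the compatibility \eqref{2.2}. Writing $\varphi^2 = -Q + \nu\,\eta\otimes\xi$ via $Q\xi=\nu\xi$, I would first record $\varphi^2 X = -QX + \nu\,\eta(X)\,\xi$ and show that $\ker\varphi\subseteq\langle\xi\rangle$: if $\varphi X=0$, then $\varphi^2 X=0$ forces $QX=\eta(X)\,Q\xi=Q(\eta(X)\xi)$, and nonsingularity of $Q$ gives $X=\eta(X)\,\xi$. This use of nonsingularity of $Q$ is exactly what replaces the identity in the classical argument of \cite[Theorem~4.1]{blair2010riemannian}.

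Next, since $\varphi^2\xi = -Q\xi + \nu\,\eta(\xi)\,\xi = 0$, the vector $\varphi\xi$ lies in $\ker\varphi\subseteq\langle\xi\rangle$, so $\varphi\xi=\eta(\varphi\xi)\,\xi$; applying $\varphi$ once more and using $\varphi^2\xi=0$ yields $\eta(\varphi\xi)^2\,\xi=0$, hence $\varphi\xi=0$. Consequently $\ker\varphi=\langle\xi\rangle$ is one-dimensional and $\varphi$ has rank $2n$. For $\eta\circ\varphi=0$ I would decompose $X=(X-\eta(X)\xi)+\eta(X)\,\xi$ and use \eqref{2.1-D} together with $\varphi\xi=0$ to conclude $\varphi X\in\mathcal{D}=\ker\eta$. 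Finally $[Q,\varphi]=0$ follows by evaluating $\varphi^3$ in two ways: composing $\varphi^2=-Q+\nu\,\eta\otimes\xi$ with $\varphi$ on the left gives $\varphi^3=-\varphi Q$ (since $\varphi\circ(\eta\otimes\xi)=0$ by $\varphi\xi=0$), while composing on the right gives $\varphi^3=-Q\varphi$ (since $(\eta\otimes\xi)\circ\varphi=0$ by $\eta\circ\varphi=0$), whence $\varphi Q=Q\varphi$.

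For part (b), skew-symmetry of $\varphi$ is the crux. I would substitute $\varphi Y$ for $Y$ in \eqref{2.2}. On the left, $g(\varphi X,\varphi^2 Y)=-g(\varphi X,QY)$, because $\varphi^2 Y=-QY+\nu\,\eta(Y)\,\xi$ and $g(\varphi X,\xi)=\eta(\varphi X)=0$ by \eqref{2.2A} and (a). On the right, $g(X,Q\varphi Y)-\eta(X)\,\eta(Q\varphi Y)=g(X,\varphi QY)$, using $[Q,\varphi]=0$ and $\eta\circ\varphi=0$ from (a). Equating gives $-g(\varphi X,QY)=g(X,\varphi QY)$ for all $X,Y$; since $Q$ is nonsingular, replacing $QY$ by an arbitrary $Z$ yields $g(\varphi X,Z)=-g(X,\varphi Z)$, the first identity in \eqref{E-Q2-g}. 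Then $Q$ is self-adjoint for free: from $Q=-\varphi^2+\nu\,\eta\otimes\xi$, the operator $\varphi^2$ is self-adjoint because $\varphi$ is skew-symmetric, and $\eta\otimes\xi$ is self-adjoint because $g((\eta\otimes\xi)X,Y)=\eta(X)\,\eta(Y)$ by \eqref{2.2A}.

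I expect the main obstacle to be the two places where nonsingularity of $Q$ must be invoked decisively: first in locating $\ker\varphi$ (and hence proving $\varphi\xi=0$ and the rank statement), and second in passing from $-g(\varphi X,QY)=g(X,\varphi QY)$ to genuine skew-symmetry of $\varphi$. Unlike the classical case, $\varphi^2$ is governed by $Q$ rather than by the identity, so the argument must carry $Q$ through every identity and invoke $Q\xi=\nu\xi$, $Q(\mathcal{D})=\mathcal{D}$ and $\det Q\neq 0$ at precisely the right moments; everything else reduces to the standard almost-contact bookkeeping.
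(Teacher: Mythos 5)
Your proposal is correct. Part (a) is essentially the paper's own argument: you pin down $\ker\varphi$ via nonsingularity of $Q$ (the paper runs the identical computation as a case analysis on $\varphi\,\xi=\mu\,\xi+X$ with $X\in\ker\eta$), you get $\eta\circ\varphi=0$ from the $\varphi$-invariance \eqref{2.1-D} of $\mathcal{D}$ together with $\varphi\,\xi=0$, and you get $[Q,\varphi]=0$ from the two evaluations of $\varphi^3$; only the packaging (a clean kernel lemma versus the paper's case split) differs. In part (b), however, you take a genuinely different route. The paper first proves that $Q$ is self-adjoint, by observing that the left-hand side of \eqref{2.2} is symmetric in $X,Y$, so $g(X,QY)=g(QX,Y)$ holds on $\mathcal{D}$, which combined with $Q\,\xi=\nu\,\xi$ and $\mathcal{D}\perp\xi$ gives \eqref{E-Q2-g}$_2$; it then derives skew-symmetry of $\varphi$ by using that $\varphi|_{\mathcal{D}}$ maps onto $\mathcal{D}$ (the rank statement), writing a vector of $\mathcal{D}$ as $\varphi\tilde Y$ and feeding that back into \eqref{2.2}. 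You reverse both the order and the mechanism: you polarize \eqref{2.2} by the substitution $Y\mapsto\varphi Y$, use the part (a) identities $[Q,\varphi]=0$, $\eta\circ\varphi=0$ and $g(\varphi X,\xi)=0$ to reduce it to $-g(\varphi X,QY)=g(X,\varphi\,QY)$, and then strip off $Q$ by its pointwise surjectivity; self-adjointness of $Q$ is afterwards immediate from $Q=-\varphi^2+\nu\,\eta\otimes\xi$, since $\varphi^2$ and $\eta\otimes\xi$ are both self-adjoint once $\varphi$ is skew. In short, the paper leans on invertibility of $\varphi$ on $\mathcal{D}$, while you lean on invertibility of $Q$ on $TM$; both are sound and of comparable length, and your ordering has the small advantage that \eqref{E-Q2-g}$_2$ drops out as a one-line corollary rather than requiring a separate verification via the splitting $TM=\mathcal{D}\oplus\mathbb{R}\,\xi$.
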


\begin{proof}
(a) By \eqref{2.1}, $\varphi^2\xi=0$,
hence, either $\varphi\,\xi=0$ or $\varphi\,\xi$ is a nontrivial vector of $\ker\varphi$.
Applying \eqref{2.1} to $\varphi\,\xi$, we get $Q(\varphi\,\xi)=\eta(\varphi\,\xi)Q\,\xi$.
If $\varphi\,\xi=\mu\,\xi$ for a nonzero function $\mu:M\to\mathbb{R}$, then
$0=\varphi^2\xi =\mu\cdot\varphi\,\xi=\mu^2\xi\ne0$ -- a contradiction.
Assuming $\varphi\,\xi=\mu\,\xi+X$ for some $\mu:M\to\mathbb{R}$ and nonzero $X\in\ker\eta$,
by \eqref{2.1} we get $QX=0$ -- a contradiction to non-singularity of $Q$. Thus,~$\varphi\,\xi=0$.

Next, since $\varphi\,\xi=0$ everywhere, ${\rm rank}\,\varphi < 2n+1$.
If a vector field $\xi'$ satisfies $\varphi\,\xi'=0$, then \eqref{2.1} gives
$Q\,\xi'=\eta(\xi')Q\,\xi$. One may write $\xi'=\mu\xi+X$ for some $\mu:M\to\mathbb{R}$ and $X\in\ker\eta$.
This yields
$Q\,(\mu\xi+X)=\eta(\mu\xi+X)Q\,\xi$,
i.e., $Q X =0$, hence, $\xi'$ is collinear with $\xi$, and so ${\rm rank}\,\varphi = 2n$.

To show $\eta\circ\varphi=0$, note that $\eta(\varphi\,\xi)=\eta(0)=0$, and, using \eqref{2.1-D}, we get $\eta(\varphi X)=0$ for $X\in{\cal D}$.
Using \eqref{2.1} and $\varphi(Q\,\xi)=\nu\,\varphi\,\xi=0$, we get
\begin{align*}
 \varphi^3 X & = \varphi(\varphi^2 X) = -\varphi\,QX +\eta(X)\,\varphi(Q\,\xi) = -\varphi\,QX,\\
 \varphi^3 X & = \varphi^2(\varphi X) = -Q\,\varphi X +\eta(\varphi X) Q\,\xi = -Q\,\varphi X
\end{align*}
for any $X$, that proves $[Q,\,\varphi]=Q\,\varphi - \varphi Q = 0$.

(b) By~\eqref{2.2}, the~restriction $Q_{\,|{\cal D}}$ is self-adjoint. This and \eqref{2.1Q-nu} provide \eqref{E-Q2-g}$_2$.
For any $Y\in{\cal D}$ there is $\tilde Y\in{\cal D}$ such that $\varphi\,Y=\tilde Y$.
Thus, \eqref{E-Q2-g}$_1$ follows from \eqref{2.2} and \eqref{E-Q2-g}$_2$ for $X\in{\cal D}$ and $\tilde Y$.
\end{proof}

The {fundamental $2$-form} $\Phi$ on $M(\varphi,Q,\xi,\eta,g)$ is defined (as in the classical case) by
\begin{align*}
 \Phi(X,Y)=g(X,\varphi Y),\quad X,Y\in\mathfrak{X}_M.
\end{align*}
We~introduce a \textit{weak contact metric structure} as a weak almost contact metric structure satisfying
\begin{align}\label{2.3}
 \Phi=d\eta,
\end{align}
where
\begin{equation}\label{3.3A}
 d\eta(X,Y) = \frac12\,\{X(\eta(Y)) - Y(\eta(X)) - \eta([X,Y])\},\quad X,Y\in\mathfrak{X}_M.
\end{equation}
Recall the following formulas (with $X\in\mathfrak{X}_M$):
\begin{align}\label{3.3B}
 (\pounds_{\xi}\,\varphi)(X) &  = [\xi, \varphi X] - \varphi [\xi, X],\\
\label{3.3C}
 (\pounds_{\xi}\,\eta)(X) & = \xi(\eta(X)) - \eta([\xi, X]) ,
\end{align}
where $\pounds_{\xi}$ is the Lie derivative in the $\xi$-direction.
A weak almost contact metric structure $(\varphi,Q,\xi,\eta,g)$, for which $\xi$
is Killing, i.e., $\pounds_{\xi}\,g=0$, is called a \textit{weak $K$-contact structure}.

A weak almost contact structure $(\varphi,Q,\xi,\eta)$ on a manifold $M$ will be called {\it normal}
if the following tensor is identically zero:
\begin{align}\label{2.6X}
 N^{(1)}(X,Y) = [\varphi,\varphi](X,Y) + 2\,d\eta(X,Y)\,Q\,\xi,\quad X,Y\in\mathfrak{X}_M.
\end{align}
The Nijenhuis torsion $[\varphi,\varphi]$ of $\varphi$ is given~by
\begin{align}\label{2.5}
 [\varphi,\varphi](X,Y) = \varphi^2 [X,Y] + [\varphi X, \varphi Y] - \varphi[\varphi X,Y] - \varphi[X,\varphi Y],\quad X,Y\in\mathfrak{X}_M.
\end{align}

\begin{remark}\rm
Recall that the Levi-Civita connection $\nabla$ of a Riemannian metric $g$ is given by
\begin{align}\label{3.2}
 2\,g(\nabla_{X}Y,Z) &= X\,g(Y,Z) + Y\,g(X,Z) - Z\,g(X,Y) \notag\\
 &+ g([X,Y],Z) +g([Z,X],Y) - g([Y,Z],X),
\end{align}	
and has the properties $[X,Y]=\nabla_X\,Y-\nabla_Y\,X$ and $X\,g(Y,Z)=g(\nabla_X\,Y,Z)+g(Y,\nabla_X\,Z)$.
Thus, \eqref{2.5} can be written in terms of $\nabla\varphi$ as
\begin{align}\label{4.NN}
 [\varphi,\varphi](X,Y) = (\varphi\nabla_Y\,\varphi - \nabla_{\varphi Y}\,\varphi) X - (\varphi\nabla_X\,\varphi - \nabla_{\varphi X}\,\varphi) Y;
\end{align}
in particular, since $\varphi\,\xi=0$,
\begin{align}\label{4.NNxi}
 [\varphi,\varphi](X,\xi)= \varphi(\nabla_{\xi}\,\varphi)X +\nabla_{\varphi X}\,\xi -\varphi\,\nabla_{X}\,\xi, \quad X\in \mathfrak{X}_M,
\end{align}
\end{remark}

A normal weak contact metric manifold will be called a {\it weak Sasakian manifold}.

Next, we define an equivalence relation on the set of weak almost contact structures $C^{\,w}_1$ and on its important subsets of
weak contact metric structures $C^{\,w}_2$ and weak Sasakian structures $C^{\,w}_3$.
The factor-spaces are denoted by $C^{\,w}_{i/\sim}$ for $i=1,2,3$.
One can easily verify that the following definition is correct (see the proof of Proposition~\ref{P-22}).

\begin{definition}\rm
(i)~Two weak almost contact structures $(\varphi,Q,\xi,\eta)$ and $(\varphi',Q',\xi,\eta)$ on $M$ are said to be homothetically equivalent if
\begin{subequations}
\begin{align}
\label{Tran'}
 & \varphi = \sqrt\lambda\ \varphi', \\
\label{E-Q'-lambda}
 & Q\,|_{\,{\mathcal D}}=\lambda\,Q'\,|_{\mathcal D},\quad
 Q\,\xi = \lambda'\,Q'\xi
\end{align}
for some $\lambda,\lambda'\in\mathbb{R}_+$.
(ii)~Two weak contact metric structures $(\varphi,Q,\xi,\eta,g)$ and $(\varphi',Q',\xi,\eta,g')$ on $M$
are said to be homothetically equivalent if they satisfy conditions \eqref{Tran'}, \eqref{E-Q'-lambda} and
\begin{align}\label{Tran2'}
 g|_{\,{\mathcal D}} = \lambda^{-\frac12}\,g'|_{\,{\mathcal D}},\quad
 \iota_{\,\xi}\,g= \iota_{\,\xi}\,{g}' ,
\end{align}
where $\iota$ $($``\,iota"$)$ is the interior product operation.
(iii)~Two weak Sasakian structures $(\varphi,Q,\xi,\eta,g)$ and $(\varphi',Q',\xi,\eta,g')$ on $M$
are homothetically equivalent if they satisfy conditions (\ref{Tran'}-c) and
\begin{align*}
 \lambda = \lambda' .
\end{align*}
\end{subequations}
\end{definition}

In our modification of classical conditions we are motivated by the following.

\begin{proposition}\label{P-22}
Let $(\varphi,Q,\xi,\eta)$ be a weak almost contact structure on $M$ such that
\begin{subequations}
\begin{align*}
 Q\,|_{\,{\mathcal D}}=\lambda\,{\rm id}_{\mathcal D},\quad
 Q\,\xi = \nu\,\xi
\end{align*}
for some $\lambda,\nu\in\mathbb{R}_+$.
Then the following is true:

\noindent
{\rm (i)} $(\varphi', \xi, \eta)$ is an almost contact structure on $M$, where $\varphi'$ is given by
\begin{align}\label{Tran}
\varphi = \sqrt{\lambda}\ \varphi'.
\end{align}
{\rm (ii)} if $(\varphi,Q,\xi,\eta,g)$ is a weak contact metric structure on $M$ with conditions \eqref{Tran} and
\begin{align}\label{Tran2}
 g|_{\,{\mathcal D}} = \lambda^{-\frac12}\,g'|_{\,{\mathcal D}},\quad
 \iota_{\,\xi}\,g= \iota_{\,\xi}\,{g}',
\end{align}
then
$(\varphi',\xi,\eta,{g}')$ is a contact metric structure on $M$.

\noindent
{\rm (iii)} if $(\varphi,Q,\xi,\eta,g)$ is a weak Sasakian structure on $M$ with conditions {\rm (\ref{Tran},b)} and
\begin{align*}
 \lambda = \nu,
\end{align*}
\end{subequations}
then $(\varphi', \xi, \eta, {g}')$ is a Sasakian structure on $M$.
\end{proposition}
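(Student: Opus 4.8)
The plan is to verify each of the three classical structure conditions directly, decomposing an arbitrary vector field as $X=\eta(X)\,\xi+X_{\mathcal D}$ with $X_{\mathcal D}\in{\mathcal D}$ and exploiting throughout that $\lambda,\nu$ are \emph{constants}. For part~(i), substituting $Q\,|_{\mathcal D}=\lambda\,{\rm id}_{\mathcal D}$ and $Q\,\xi=\nu\,\xi$ into the weak identity \eqref{2.1} gives $\varphi^2 X=-\lambda\,X_{\mathcal D}$, and since $\varphi=\sqrt\lambda\,\varphi'$ with $\lambda>0$, this yields ${\varphi'}^2 X=-X_{\mathcal D}=-X+\eta(X)\,\xi$. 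Together with $\eta(\xi)=1$ this is exactly the classical identity ${\varphi'}^2=-{\rm id}+\eta\otimes\xi$, so $(\varphi',\xi,\eta)$ is an almost contact structure.

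For part~(ii) I would first note that the conditions \eqref{Tran2} determine $g'$ completely, namely $g'(X,Y)=\eta(X)\eta(Y)+\sqrt\lambda\,g(X_{\mathcal D},Y_{\mathcal D})$, which is visibly symmetric and positive-definite; in particular $\iota_\xi g=\iota_\xi g'$ with \eqref{2.2A} gives $g'(X,\xi)=\eta(X)$. The metric compatibility $g'(\varphi'X,\varphi'Y)=g'(X,Y)-\eta(X)\eta(Y)$ then reduces, after restricting to $X,Y\in{\mathcal D}$ (the $\xi$-directions being disposed of by $\varphi'\xi=0$), to the weak compatibility \eqref{2.2}, once the scalar factors $\lambda$ (from $Q\,|_{\mathcal D}$), $\lambda^{-1}$ (from the two copies of $\varphi'=\lambda^{-1/2}\varphi$) and $\sqrt\lambda$ (from $g'|_{\mathcal D}=\sqrt\lambda\,g|_{\mathcal D}$) are collected. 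For the contact condition I would prove $\Phi'=\Phi$: since $\varphi'Y\in{\mathcal D}$ kills the $\xi$-component of the first slot, the scalars $\sqrt\lambda$ and $\lambda^{-1/2}$ again cancel, giving $\Phi'(X,Y)=g'(X,\varphi'Y)=g(X,\varphi Y)=\Phi(X,Y)=d\eta(X,Y)$. Hence $(\varphi',\xi,\eta,g')$ is a contact metric structure.

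For part~(iii) it remains to transfer normality. The key computation is that, because $\sqrt\lambda$ is constant, it factors out of every term of the Nijenhuis torsion \eqref{2.5}, giving $[\varphi,\varphi]=\lambda\,[\varphi',\varphi']$. Combined with $Q\,\xi=\nu\,\xi=\lambda\,\xi$ -- this is exactly where the hypothesis $\lambda=\nu$ enters -- the weak normality tensor \eqref{2.6X} becomes $N^{(1)}(X,Y)=\lambda\,\big([\varphi',\varphi'](X,Y)+2\,d\eta(X,Y)\,\xi\big)$. Since $N^{(1)}=0$ and $\lambda\neq0$, the classical normality tensor $[\varphi',\varphi']+2\,d\eta\otimes\xi$ vanishes, and with part~(ii) this makes $(\varphi',\xi,\eta,g')$ Sasakian.

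The main obstacle -- more a point requiring care than a genuine difficulty -- is the bookkeeping of the scaling factors and the insistence that $\lambda,\nu$ be constants rather than functions: the clean factorization $[\varphi,\varphi]=\lambda\,[\varphi',\varphi']$ and the cancellations in (ii) break down if $\lambda$ varies, since derivatives of $\lambda$ would intrude through the Lie brackets. The role of $\lambda=\nu$ in (iii) is precisely to reconcile the factor $\lambda$ produced by rescaling $\varphi$ with the factor $\nu$ carried by $Q\,\xi$ in the normality tensor.
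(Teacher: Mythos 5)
Your proposal is correct and takes essentially the same approach as the paper: the same substitution into \eqref{2.1} for (i), the same bookkeeping of the factors $\sqrt{\lambda}$, $\lambda^{-1/2}$ for (ii) (you verify the explicitly reconstructed $g'$ where the paper instead solves for the scaling coefficients $\beta=\lambda^{-1/2}$, $\delta=1$), and the same reduction $N^{(1)}=\lambda\,\big([\varphi',\varphi']+2\,d\eta\otimes\xi\big)$ for (iii), resting precisely on the constancy of $\lambda$ and the hypothesis $\nu=\lambda$. There are no gaps.
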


\begin{proof}{\rm (i)}
Substituting $\varphi = \gamma\,\varphi'$ in \eqref{2.1},
for $\xi$ we get identity, and for $X\in\mathcal{D}$ we get
\begin{align*}
 \gamma^2(\varphi')^2 X = \lambda\,X,
\end{align*}
which reduces to the classical equality when $\gamma=\sqrt{\lambda}$.
Thus, $(\varphi', \xi, \eta)$ is an almost contact structure on $M$.

\smallskip

{\rm (ii)} Let $g|_{\,{\mathcal D}} = \beta\,g'|_{\,{\mathcal D}}$ and $\iota_{\,\xi}\,g=\delta\iota_{\,\xi}\,{g}'$
for some functions $\beta$ and $\delta$ on $M$. Using \eqref{2.2}, we get
\[
 \lambda\,\beta\,g'(\varphi' X, \varphi' Y) = \lambda\,\beta\, g'(X, Q'Y) -\lambda\,\eta(X)\,\eta(Q' Y)
 = \lambda\,\beta\,g'(X, Q' Y), \quad X,Y\in\mathcal{D},
\]
-- no restrictions for $\beta$ and $\lambda$. The same is when $X=\xi$ or/and $Y=\xi$.
Next, we calculate
\[
 \delta\, \iota_{\,\xi}\,{g}'(Y) \overset{\eqref{Tran2}}=
 \iota_{\,\xi}\,g(Y)=g(\xi,Y)\overset{\eqref{2.2A}}=\eta(Y) \overset{\eqref{2.2A}}= {g}'( {\xi},Y) = \iota_{\,\xi}\,{g}'(Y),
\]
 which gives $\delta=1$.
Next, substituting the values of $(\varphi,Q,\xi,\eta,g)$ in \eqref{2.3}, we get
\begin{align}\label{d-eta}
 d\eta(X,Y)=\beta\gamma\,g'(X, \varphi'\,Y).
\end{align}
For $ X,Y\in\mathcal{D}$, \eqref{d-eta} is valid for $\beta\gamma=1$,
i.e., $\beta=\frac{1}{\sqrt{\lambda}}$, and for $X=\xi$ or/and $Y=\xi$ we find $d\eta({\xi},Y)=0$, see Proposition~\ref{prop2.1} below.
Hence, $(\varphi', \xi, \eta, {g}')$ is a contact metric structure on $M$.

{\rm (iii)} Substituting the values of $(\varphi,Q,\xi,\eta,g)$ in the equality $N^{(1)}(X,Y)=0$ and using the pro\-perty of Lie bracket, we get
\[
 \lambda\,[\varphi', \varphi' ](X,Y) + 2\,\nu\,d\eta(X,Y)\,\xi=0
\]
for any $X,Y\in\mathfrak{X}_M$, which reduces to the classical case when $\nu=\lambda$.
Hence, $(\varphi', \xi, \eta, {g}')$ is a Sasakian structure on $M$.
\end{proof}

The three tensors $N^{(2)}, N^{(3)}$ and $N^{(4)}$ are well known in the classical theory, see \cite{blair2010riemannian}:
\begin{align}
\label{2.7X}
 N^{(2)}(X,Y) &= (\pounds_{\varphi X}\,\eta)(Y) - (\pounds_{\varphi Y}\,\eta)(X),  \\
\label{2.8X}
 N^{(3)}(X) &= (\pounds_{\xi}\,\varphi)X \overset{\eqref{3.3B}}= [\xi, \varphi X] - \varphi [\xi, X],\\
\label{2.9X}
 N^{(4)}(X) &= (\pounds_{\xi}\,\eta)(X) \overset{\eqref{3.3C}}= \xi(\eta(X)) - \eta([\xi, X])
 = 2\,d\eta(\xi, X).
\end{align}
Using \eqref{3.3A} and $\varphi\,\xi=0$, one can calculate \eqref{2.7X} explicitly as
\begin{align}\label{3.6}
 N^{(2)}(X,Y)
 &= (\varphi X)(\eta(Y)) - \eta([\varphi X,Y]) - (\varphi Y)(\eta(X)) + \eta([\varphi Y,X]) \notag\\
 &=2\,d\eta(\varphi X,Y) - 2\,d\eta(\varphi Y,X) .
\end{align}

\section{Weak contact metric manifolds}
\label{sec:2}

Here, we study the weak contact metric structure.
Define a ``small" (1,1)-tensor $\tilde Q$ by
\begin{equation}\label{E-tildeQ}
 \tilde{Q} = Q - {\rm id} ,
\end{equation}
and note that $[\tilde{Q},\varphi]=0$ and $\tilde{Q}\,\xi=(\nu-1)\,\xi$, where $\nu = g(Q\,\xi,\xi)$, see \eqref{2.1Q-nu}.
We also obtain
\begin{equation}\label{E2-tildeQ}
  -\tilde{Q}\,\varphi = \varphi+\varphi^3 .
\end{equation}
Note that $X^\top = X - \eta(X)\,\xi$ is the projection of the vector $X\in TM$ onto ${\cal D}$.

\begin{remark}\rm
Similarly to \eqref{2.1}--\eqref{2.2}, we can define a weak para-contact metric structure, and use $\tilde Q=Q-{\rm id}_{\,TM}$
instead of \eqref{E-tildeQ}, etc. (see \cite{RWo-2} and \cite[Section~5.3.8]{Rov-Wa-2021} where $\nu=-1$),
which allows us (as in Sections~\ref{sec:2}--\ref{sec:5}) generalize some results on para-contact metric structures introduced in~\cite{kw1985}.
\end{remark}

The following theorem generalizes \cite[Theorem~6.1]{blair2010riemannian}, i.e., $Q={\rm id}_{\, TM}$.

\begin{theorem}\label{thm6.1}
For a weak almost contact metric structure $(\varphi,Q,\xi,\eta,g)$, the vanishing of $N^{(1)}$ implies that $N^{(3)}$ and $N^{(4)}$ vanish and
\begin{align*}
 N^{(2)}(X,Y) =\eta([\tilde QX^\top,\,\varphi Y]) +g(\tilde Q\,\xi,\xi)\,\eta([\varphi X,Y]) .
\end{align*}
\end{theorem}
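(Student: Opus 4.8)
The plan is to prove the three assertions in two stages: feeding $\xi$ into $N^{(1)}$ to annihilate $N^{(3)}$ and $N^{(4)}$, and then extracting the $\eta$-component of $N^{(1)}$ to compute $N^{(2)}$. For the first stage I would set $Y=\xi$ in \eqref{2.6X}. Using $\varphi\,\xi=0$ and the definition \eqref{2.8X} of $N^{(3)}$, a direct computation from \eqref{2.5} gives $[\varphi,\varphi](X,\xi)=\varphi\,N^{(3)}(X)$, so with \eqref{2.9X} and $Q\,\xi=\nu\,\xi$,
\[
 N^{(1)}(X,\xi)=\varphi\,N^{(3)}(X)-\nu\,N^{(4)}(X)\,\xi .
\]
Applying $\eta$ and using $\eta\circ\varphi=0$, $\eta(\xi)=1$ and $\nu>0$ gives $N^{(4)}=0$, whence $\varphi\,N^{(3)}=0$. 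Since $\ker\varphi$ is spanned by $\xi$, this forces $N^{(3)}(X)=\eta(N^{(3)}(X))\,\xi$, and $\eta(N^{(3)}(X))=\eta([\xi,\varphi X])=\xi(\eta(\varphi X))=0$, the middle equality being exactly $N^{(4)}=0$. Hence $N^{(3)}=0$ as well.

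For the formula, I would apply $\eta$ to $N^{(1)}(X,Y)=0$. Here $\eta\circ\varphi=0$ annihilates the two $\varphi[\cdot,\cdot]$ terms of \eqref{2.5}, while $\varphi^2=-Q+\eta\otimes Q\xi$ from \eqref{2.1}, together with $Q$ self-adjoint and $Q\xi=\nu\,\xi$, forces $\eta(\varphi^2[X,Y])=0$; what survives is the scalar master identity
\[
 d\eta(\varphi X,\varphi Y)=\nu\,d\eta(X,Y).
\]
Because $\varphi X=\varphi X^\top$ and $d\eta(\varphi\,\cdot\,,\xi)=0$ (a consequence of $N^{(4)}=0$), formula \eqref{3.6} shows $N^{(2)}(X,Y)=N^{(2)}(X^\top,Y^\top)$, so it suffices to work on ${\cal D}$. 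There the key move is to replace $X$ by $\varphi X$ in the master identity and invoke $\varphi^2X=-QX$ for $X\in{\cal D}$, which yields $d\eta(QX,\varphi Y)=-\nu\,d\eta(\varphi X,Y)$; splitting $Q={\rm id}+\tilde Q$ via \eqref{E-tildeQ} isolates the genuinely ``weak'' term $d\eta(\tilde QX,\varphi Y)$. Substituting back into \eqref{3.6} and rewriting each $d\eta$ of two horizontal vectors as $-\tfrac12\,\eta([\cdot,\cdot])$ through \eqref{3.3A} should collapse to $\eta([\tilde QX^\top,\varphi Y])+(\nu-1)\,\eta([\varphi X,Y])$, with $\nu-1=g(\tilde Q\xi,\xi)$.

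The step I expect to be the main obstacle is this final bookkeeping. One must check that the ``classical'' contribution (the $\tilde Q=0$, $\nu=1$ part) cancels so as to recover the known identity $N^{(2)}=0$, while the remaining $\tilde Q$- and $(\nu-1)$-terms reassemble into a single bracket rather than into a sum weighted by awkward factors such as $1/\nu$ or $1/(1+\nu)$ that appear at intermediate stages. Equally delicate is the treatment of the vertical components: one must route everything through the projection $X^\top$, since terms such as $(\varphi X^\top)(\eta(Y))$ emerge when the arguments are not horizontal and have to be reconciled with the fact, established above, that $N^{(2)}$ depends on its arguments only through $X^\top$ and $Y^\top$.
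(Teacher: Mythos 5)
Your first stage is correct and is in substance the paper's own argument: the paper likewise evaluates $N^{(1)}(\cdot\,,\xi)$, deduces $d\eta(\cdot\,,\xi)=0$ (hence $N^{(4)}=0$ and $\eta([\xi,\varphi X])=0$), and then kills $N^{(3)}$ --- concluding via nonsingularity of $Q$ after one more application of $\varphi$, where you instead use $\ker\varphi=\mathbb{R}\,\xi$; these are equivalent. Your second stage is also the paper's computation in cleaner packaging: the ``master identity'' $d\eta(\varphi X,\varphi Y)=\nu\,d\eta(X,Y)$ is exactly $\eta(N^{(1)}(X,Y))=0$, and substituting $X\mapsto\varphi X$ into it is the same as pairing $N^{(1)}(\varphi X,Y)=0$ with $\xi$, which is the paper's \eqref{2.8}. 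Moreover your feared $1/\nu$-factors do not materialize: for $X,Y$ tangent to ${\cal D}$, where $2\,d\eta(U,V)=-\eta([U,V])$ for horizontal $U,V$, your relations collapse exactly to $\eta([\tilde QX,\varphi Y])+g(\tilde Q\,\xi,\xi)\,\eta([\varphi X,Y])$.

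The genuine gap is the sentence ``$N^{(2)}(X,Y)=N^{(2)}(X^\top,Y^\top)$, so it suffices to work on ${\cal D}$''. That reduction applies to the left-hand side only: $N^{(2)}$ is tensorial (by \eqref{3.6} it is built from the $2$-form $d\eta$ and the tensor $\varphi$), but the right-hand side of the target formula is not, because $\eta([\varphi X,Y])=\eta([\varphi X,Y^\top])+(\varphi X)(\eta(Y))$ (using $\eta([\varphi X,\xi])=0$), and the derivative term $(\varphi X)(\eta(Y))$ is neither tensorial nor zero. Hence validity on ${\cal D}$ does not imply validity for arbitrary vector fields --- and in fact it cannot: carrying your computation through for general arguments yields
\[
 N^{(2)}(X,Y)=\eta([\tilde QX^\top,\varphi Y])+g(\tilde Q\,\xi,\xi)\,\eta([\varphi X,Y^\top]),
\]
which differs from the asserted identity by $g(\tilde Q\,\xi,\xi)\,(\varphi X)(\eta(Y))$. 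This difference is genuinely nonzero: taking $Y=f\xi$, the left-hand side vanishes by tensoriality while the asserted right-hand side equals $(\nu-1)\,(\varphi X)f$; a concrete instance is the homothetically rescaled Sasakian structure $\varphi=\sqrt{\nu}\,\varphi'$, $Q=\nu\,{\rm id}$ with $\nu\ne1$, for which $N^{(2)}\equiv0$. So the ``main obstacle'' you flagged is real and is not a matter of bookkeeping: your method, carried out correctly, proves the displayed ($Y^\top$) version --- equivalently, the stated formula restricted to arguments in ${\cal D}$ --- but not the statement as written. For comparison, the paper's own derivation reaches the stated formula only through a slip: the coefficient $\nu$ of $(\varphi X)(\eta(Y))$ in \eqref{2.8} is dropped in passing to \eqref{2.9}.
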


\begin{proof}
By assumption: $N^{(1)}(X,Y)=0$. Thus, taking $\xi$ instead of $Y$ and using the expression of Nijenhuis tensor \eqref{2.5}, we obtain
\begin{align}\label{3.11}
 0 &= [\varphi,\varphi](X,\xi) + 2\,d\eta(X,\xi)\,Q\,\xi \notag\\
 &= \varphi^2[X,\xi] - \varphi[\varphi X,\xi] + 2\,d\eta(X,\xi)\,Q\,\xi.
\end{align}
Taking the scalar product of \eqref{3.11} with $\xi$ and using
skew-symmetry of $\varphi$, $\varphi\,\xi=0$ and $\nu\ne0$, we~get
\begin{align}\label{3.11A}
 d\eta(X,\xi)=0,
\end{align}
hence, \eqref{2.9X} yields $N^{(4)}=0$.
Next, combining \eqref{3.11} and \eqref{3.11A}, we get
\begin{align*}
 0 &= [\varphi,\varphi](X,\xi)
= \varphi^2[X,\xi] - \varphi[\varphi X,\xi] \notag\\
 &= \varphi\{(\pounds_{\xi}\,\varphi)X - \varphi\,\pounds_{\xi}X\}
 = \varphi\,(\pounds_{\xi}\,\varphi)X,
\end{align*}
by using \eqref{2.5} and $\varphi\,\xi=0$. Applying $\varphi$ and using \eqref{2.1} and $\eta\circ\varphi=0$, we achieve
\begin{align}
\label{3.14}
 0 &= \varphi^2 (\pounds_{\xi}\,\varphi)X
 = -Q(\pounds_{\xi}\,\varphi)X + \eta((\pounds_{\xi}\,\varphi)X)\,Q\,\xi \notag\\
 &= -Q(\pounds_{\xi}\,\varphi)X + \eta([\xi,\varphi X])\,Q\,\xi - \eta(\varphi [\xi,X])\,Q\,\xi \notag\\
 &=  -Q(\pounds_{\xi}\,\varphi)X + \eta([\xi,\varphi X])\,Q\,\xi.
\end{align}
Further, \eqref{3.11A} and \eqref{3.3A} yield that
\begin{align}\label{3.11B}
	0=2\,d\eta(\varphi X, \xi)
	=(\varphi X)(\eta(\xi)) - \xi(\eta(\varphi X)) - \eta([\varphi X, \xi])
	=\eta([\xi, \varphi X]).
\end{align}
Since $Q$ is non-singular, from \eqref{3.14} we get $\pounds_{\xi}\,\varphi=0$, i.e, $N^{(3)}=0$.
	
 Replacing $X$ by $\varphi X$ in our assumption $N^{(1)}=0$ and using \eqref{2.5} and \eqref{3.3A}, we acquire
\begin{align}\label{2.6}
 0 &= g([\varphi,\varphi](\varphi X,Y) + 2\,d\eta(\varphi X,Y)\,Q\,\xi,\ \xi) \notag\\
 &= g([\varphi^2 X,\varphi Y],\xi) + \nu\,\big\{(\varphi X)(\eta(Y)) - \eta([\varphi X,Y])\big\}.
\end{align}
Using \eqref{2.1} and equality
$[\varphi Y, \eta(X)\,Q\,\xi] = (\varphi Y)(\eta(X))\,Q\,\xi + \eta(X)[\varphi Y, Q\,\xi]$, we rewrite \eqref{2.6} as
\begin{align}\label{2.8}
 0= - g([QX,\varphi Y],\xi) +\eta(X)\,\eta([Q\,\xi, \varphi Y]) - (\varphi Y)(\eta(X))\,\nu + (\varphi X)(\eta(Y))\,\nu - \eta([\varphi X,Y])\,\nu.
\end{align}
Equation \eqref{3.11B} gives $\eta([\varphi Y, Q\,\xi])=0$. So, \eqref{2.8} becomes
\begin{align}\label{2.9}
 -\eta([QX, \varphi Y]) - (\varphi Y)(\eta(X))\,\nu + (\varphi X)(\eta(Y)) - \eta([\varphi X,Y])\,\nu
 +\eta(X)\,\eta([\tilde Q\,\xi, \varphi Y]) = 0.
\end{align}
Finally, combining \eqref{2.9} with \eqref{3.6}, we get
\begin{align*}
 N^{(2)}(X,Y) =\eta([\tilde QX,\,\varphi Y]) -\eta(X)\,\eta([\tilde Q\,\xi, \varphi Y])
 +g(\tilde Q\,\xi,\xi)
 \big( (\varphi Y)(\eta(X)) + \eta([\varphi X,Y])\big),
\end{align*}
from which and $X = X^\top +\eta(X)\,\xi$ the required expression of $N^{(2)}$ follows.
\end{proof}

\begin{proposition}\label{prop2.1}
The following equality is valid
{\rm (i)} for a weak contact metric manifold, and
{\rm (ii)}~for a normal weak almost contact metric manifold:
\begin{align}\label{2.Q3}
\iota_{\,\xi}\, d\eta=0,
\end{align}
moreover, the integral curves of $\xi$ are geodesics.	
\end{proposition}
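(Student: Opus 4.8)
The plan is to establish $\iota_{\,\xi}\,d\eta=0$ separately in the two settings and then to deduce the geodesic property from it, both derivations resting on the same metric-compatibility identities.

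For (i) I would evaluate the defining relation $\Phi=d\eta$ of \eqref{2.3} on the pair $(\xi,X)$, obtaining $d\eta(\xi,X)=\Phi(\xi,X)=g(\xi,\varphi X)$. By the compatibility identity \eqref{2.2A} the right-hand side equals $\eta(\varphi X)$, which vanishes since $\eta\circ\varphi=0$ by part~(a) of the Proposition above. Hence $d\eta(\xi,X)=0$ for all $X$, that is, $\iota_{\,\xi}\,d\eta=0$. For (ii) normality means $N^{(1)}=0$, and Theorem~\ref{thm6.1} already records that $N^{(1)}=0$ forces $N^{(4)}=0$ (the step there producing $d\eta(X,\xi)=0$ from $N^{(1)}(X,\xi)=0$ using the skew-symmetry of $\varphi$ and $\nu\neq0$). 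Since $N^{(4)}(X)=2\,d\eta(\xi,X)$ by \eqref{2.9X}, this again gives $\iota_{\,\xi}\,d\eta=0$.

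For the geodesic claim, which holds in both cases, I would first rephrase $\iota_{\,\xi}\,d\eta=0$ as $\pounds_{\xi}\,\eta=0$: since $\eta(\xi)=1$ is constant, \eqref{3.3C} shows $(\pounds_{\xi}\,\eta)(X)=\xi(\eta(X))-\eta([\xi,X])=2\,d\eta(\xi,X)$, so the two statements coincide. Then I would expand $\pounds_{\xi}\,\eta$ through the Levi-Civita connection using $\eta=\iota_{\,\xi}\,g$ from \eqref{2.2A} and the identities in the Remark on $\nabla$: writing $[\xi,X]=\nabla_\xi X-\nabla_X\xi$ and $\xi\,g(\xi,X)=g(\nabla_\xi\xi,X)+g(\xi,\nabla_\xi X)$, the $\nabla_\xi X$ contributions cancel and one is left with $(\pounds_{\xi}\,\eta)(X)=g(\nabla_\xi\xi,X)+g(\xi,\nabla_X\xi)$. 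The last term vanishes because $g(\xi,\xi)=\eta(\xi)=1$ is constant, so $g(\xi,\nabla_X\xi)=\frac12\,X\,g(\xi,\xi)=0$. Thus $g(\nabla_\xi\xi,X)=0$ for every $X$, whence $\nabla_\xi\xi=0$ and the integral curves of $\xi$ are geodesics.

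The computations are all elementary, so I expect no genuine obstacle. The only steps demanding care will be the equivalence $\iota_{\,\xi}\,d\eta=0\Leftrightarrow\pounds_{\xi}\,\eta=0$ and the cancellation in the connection expansion of $\pounds_{\xi}\,\eta$, which relies entirely on the constancy of $g(\xi,\xi)=1$; the rest is bookkeeping with \eqref{2.2A}.
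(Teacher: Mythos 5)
Your proof is correct and takes essentially the same route as the paper's: case (i) by evaluating $\Phi=d\eta$ on $\xi$ (via $\varphi\,\xi=0$, equivalently $\eta\circ\varphi=0$ with skew-symmetry), case (ii) by invoking Theorem~\ref{thm6.1} to get $N^{(4)}=0$, and the geodesic claim by converting $\iota_{\,\xi}\,d\eta=0$ into $\pounds_{\xi}\,\eta=0$ and then identifying $(\pounds_{\xi}\,\eta)(X)=g(\nabla_\xi\,\xi,X)$. The only differences are cosmetic: you derive that last identity explicitly where the paper cites Blair, and you assign the two arguments to cases (i) and (ii) as stated, whereas the paper's proof transposes the labels.
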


\begin{proof}
(i) Since $N^{(1)}=0$, then $N^{(4)}=0$, see Theorem~\ref{thm6.1}. Thus, \eqref{2.9X} provides the required \eqref{2.Q3}.
(ii)~Equation \eqref{2.3} with $Y=\xi$ yields $d\eta(X,\xi)=g(X,\varphi\,\xi)=0$ for any $X\in\mathfrak{X}_M$; therefore, we get \eqref{2.Q3}.
Using the identity $\pounds=d\circ\iota+\iota\circ d$, from the above we also have
\begin{align}\label{2.Q2}
 \pounds_{\xi}\,\eta = d (\eta(\xi)) + \iota_{\,\xi}\, d\eta = 0.
\end{align}
As in the proof of \cite[Theorem~4.5]{blair2010riemannian}, we obtain $(\pounds_{\xi}\,\eta)(X)= g(X,\nabla_\xi\,\xi)$
for any $X\in\mathfrak{X}_M$. From this and \eqref{2.Q2} we get $\nabla_\xi\,\xi=0$.
\end{proof}

\begin{theorem}\label{thm6.2}
For a weak contact metric structure $(\varphi,Q,\xi,\eta,g)$, the tensors $N^{(2)}$ and $N^{(4)}$ vanish;
moreover, $N^{(3)}$ vanishes if and only if $\,\xi$ is a Killing vector field.
\end{theorem}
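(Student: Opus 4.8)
The goal is to prove three claims about a weak contact metric structure: that $N^{(2)}$ and $N^{(4)}$ vanish unconditionally, and that $N^{(3)}$ vanishes precisely when $\xi$ is Killing. The strategy is to leverage the defining condition $\Phi=d\eta$ (equation \eqref{2.3}) of the weak contact metric structure directly, rather than assuming normality. My first step would be to exploit Proposition \ref{prop2.1}(i), which already establishes $\iota_{\,\xi}\,d\eta=0$ for a weak contact metric manifold; by the identity \eqref{2.9X}, this immediately gives $N^{(4)}=2\,d\eta(\xi,\cdot)=0$. This disposes of one of the three assertions at once and requires no further calculation.

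For $N^{(2)}$, I would use the explicit formula \eqref{3.6}, namely $N^{(2)}(X,Y)=2\,d\eta(\varphi X,Y)-2\,d\eta(\varphi Y,X)$. Substituting $\Phi=d\eta$ and the definition $\Phi(X,Y)=g(X,\varphi Y)$, this becomes $N^{(2)}(X,Y)=2\,g(\varphi X,\varphi Y)-2\,g(\varphi Y,\varphi X)$. Since $g$ is symmetric, the two terms are equal and the difference vanishes identically. Thus $N^{(2)}=0$ follows cleanly from the contact condition together with the skew-symmetry of $\varphi$ established in Proposition 1.1(b); the computation is essentially formal.

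The substantive part is the equivalence concerning $N^{(3)}=\pounds_\xi\varphi$. Here the plan is to relate $\pounds_\xi\varphi$ and $\pounds_\xi g$ through the Levi-Civita connection, mimicking the classical argument of \cite[Theorem 6.2]{blair2010riemannian} but tracking the tensor $Q$ carefully. I would first express $\nabla_X\xi$ in terms of $\varphi$ and $h:=\tfrac12\pounds_\xi\varphi$ (the operator $h$ studied in Section \ref{sec:3a}), using the weak contact condition $\Phi=d\eta$ together with $\iota_\xi d\eta=0$ and $\nabla_\xi\xi=0$ from Proposition \ref{prop2.1}. The key identity to derive is one of the form $\nabla_X\xi=-\varphi X-\varphi h X$ (or its $Q$-corrected analogue), after which a direct computation gives $(\pounds_\xi g)(X,Y)=g(\nabla_X\xi,Y)+g(X,\nabla_Y\xi)$ in terms of $h$ and $\varphi$. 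Because $\varphi$ is skew-symmetric, the purely $\varphi$ contribution cancels, leaving $\pounds_\xi g$ proportional to the symmetric part of $\varphi h$; this symmetric part vanishes iff $h=0$, i.e. iff $N^{(3)}=0$.

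The main obstacle I anticipate is establishing the precise relation between $\nabla\xi$ and $h$ in the \emph{weak} setting, where $\varphi^2=-Q+\eta\otimes Q\xi$ replaces the classical $\varphi^2=-\mathrm{id}+\eta\otimes\xi$. The classical derivation repeatedly uses $\varphi^2 X=-X+\eta(X)\xi$, and each such use must be re-examined to see how the extra factor $Q$ propagates; I would need the commutation $[Q,\varphi]=0$ from Proposition 1.1(a) and the self-adjointness of $Q$ from \eqref{E-Q2-g} to push $Q$ past $\varphi$ and to ensure that the symmetric/skew-symmetric decomposition still isolates $h$ cleanly. Provided these algebraic identities absorb the appearances of $Q$ without obstruction—which the structure of \eqref{2.1} and \eqref{E-Q2-g} strongly suggests—the equivalence $N^{(3)}=0\iff\pounds_\xi g=0$ should follow along classical lines.
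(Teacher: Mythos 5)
Your treatment of $N^{(4)}$ and $N^{(2)}$ is correct and coincides with the paper's: $N^{(4)}=2\,d\eta(\xi,\cdot)=0$ by Proposition~\ref{prop2.1}, and $N^{(2)}=0$ by substituting $\Phi=d\eta$ into \eqref{3.6}. The gap is in your third step. The key identity you plan to derive, $\nabla_X\xi=-\varphi X-\varphi h X$ ``or its $Q$-corrected analogue,'' does not survive in the weak setting: the paper's Lemma~\ref{L3.1} shows that the correct analogue is \eqref{E-30}, which carries the extra term $-\frac12\,N^{(5)}(X,\xi,\varphi Z)$, where $N^{(5)}(X,\xi,Z)=g([\xi,\varphi Z]^\top-\varphi[\xi,Z],\tilde QX)$ by \eqref{KK}. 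This correction is built from Lie brackets paired with $\tilde Q=Q-{\rm id}$, i.e.\ it is a first-order (derivative) defect, not a pointwise algebraic one, so the identities $[Q,\varphi]=0$ and self-adjointness of $Q$ cannot ``absorb'' it, contrary to your hope. Worse, your final step --- that $\pounds_\xi g$ reduces to the symmetric part of $\varphi h$, which vanishes iff $h=0$ --- silently uses the classical facts that $h$ is self-adjoint and anticommutes with $\varphi$. In the weak setting both fail: by \eqref{E-31} and \eqref{E-31A}, the defects $h-h^*$ and $h\varphi+\varphi h$ are again nonzero $\tilde Q$-terms. So the plan as written does not close; completing it would require proving that all the $N^{(5)}$- and $\tilde Q$-corrections cancel under symmetrization, which is precisely what is not obvious and not addressed. (There is also a structural point: the paper derives \eqref{E-30} from Corollary~\ref{cor3.1}, which itself invokes $N^{(2)}=N^{(4)}=0$ from Theorem~\ref{thm6.2}; that is not circular for you since you prove those parts first, but it shows how much machinery this route demands.)

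The paper avoids $\nabla\xi$, $h$ and $N^{(5)}$ entirely. Since $\pounds_\xi=d\circ\iota_\xi+\iota_\xi\circ d$ and $\iota_\xi\,d\eta=0$ by \eqref{2.Q3}, one has $\pounds_\xi\,d\eta=d(\pounds_\xi\eta)=0$; expanding the Lie derivative of the identity $d\eta(X,Y)=g(X,\varphi Y)$ by the Leibniz rule yields \eqref{3.9},
\begin{equation*}
 0=(\pounds_\xi\,d\eta)(X,Y)=(\pounds_{\xi}\,g)(X,\varphi Y) + g(X,(\pounds_{\xi}\,\varphi)Y),
\end{equation*}
and the equivalence is immediate: if $\xi$ is Killing then $g(X,(\pounds_\xi\varphi)Y)=0$ for all $X,Y$, so $N^{(3)}=0$; conversely, if $N^{(3)}=0$ then $(\pounds_\xi g)(X,\varphi Y)=0$ for all $X,Y$, and since $\varphi|_{\cal D}$ is onto ${\cal D}$ (as $\varphi^2|_{\cal D}=-Q|_{\cal D}$ is nonsingular) while $(\pounds_\xi g)(X,\xi)=N^{(4)}(X)=0$, one concludes $\pounds_\xi g=0$. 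You should replace your third step by this Lie-derivative argument; it is insensitive to all the $Q$-corrections that obstruct the classical $\nabla\xi$-based route.
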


\begin{proof} Applying \eqref{2.3} in \eqref{3.6} and using skew-symmetry of $\varphi$ we get $N^{(2)}=0$.
We prove that $N^{(4)}=0$, taking into account \eqref{2.9X} as well as Proposition~\ref{prop2.1}.
Next, we find
\begin{align}\label{3.7}
 (\pounds_{\xi}\,g)(X,Y) &= \xi(g(X,Y)) - g([\xi,X],\varphi Y) - g(X,(\pounds_{\xi}\,\varphi)Y) - g(X,\varphi[\xi,Y]),\\
 (\pounds_{\xi}\,d\eta)(X,Y) &= \xi(d\eta(X,Y)) - d\eta([\xi,X], Y) - d\eta(X,[\xi,Y]). \label{3.8}
\end{align}
Then, invoking the formula \eqref{2.3} in \eqref{3.8} and using \eqref{3.7}, we obtain
\begin{align}\label{3.9}
 (\pounds_{\xi}\,d\eta)(X,Y) = (\pounds_{\xi}\,g)(X,Y) + g(X,(\pounds_{\xi}\,\varphi)Y).
\end{align}
Since $\pounds_V=\iota_{\,V}\circ d+d\circ\iota_{\,V}$, the exterior derivative $d$ commutes with the Lie-derivative, i.e., $d\circ\pounds_V = \pounds_V\circ d$, and using Proposition~\ref{prop2.1}, we get that $d\eta$ is invariant under the action of $\xi$, i.e., $\pounds_{\xi}\,d\eta=0$.
Therefore, \eqref{3.9} implies that $\xi$ is a Killing vector field if and only if $N^{(3)}=0$.
\end{proof}

The following result generalizes \cite[Lemma 6.1]{blair2010riemannian}.

\begin{lemma}\label{lem6.1}
For a weak almost contact metric structure $(\varphi,Q,\xi,\eta,g)$, the covariant derivative of $\varphi$ is given by
\begin{align*}
 2\,g((\nabla_{X}\,\varphi)Y,Z) &= 3\,d\Phi(X,\varphi Y,\varphi Z) - 3\, d\Phi(X,Y,Z) + g(N^{(1)}(Y,Z),\varphi X)\notag\\
 &+ N^{(2)}(Y,Z)\,\eta(X) + 2\,d\eta(\varphi Y,X)\,\eta(Z) - 2\,d\eta(\varphi Z,X)\,\eta(Y) + N^{(5)}(X,Y,Z),
\end{align*}
where we supplement the traditional sequence of tensors $N^{(i)}\ (i=1,2,3,4)$, with a new
skew-symmetric with respect to $Y$ and $Z$ tensor $N^{(5)}(X,Y,Z)$ defined by
\begin{align*}
 N^{(5)}(X,Y,Z) &= (\varphi Z)\,(g(X^\top, \tilde QY)) -(\varphi Y)\,(g(X^\top, \tilde QZ)) \\
 & +\,g([X, \varphi Z]^\top, \tilde QY) - g([X,\varphi Y]^\top, \tilde QZ) \\
 & +\,g([Y,\varphi Z]^\top -[Z, \varphi Y]^\top - \varphi[Y,Z],\ \tilde Q X).
\end{align*}
In particular,
\begin{align}\label{KK}
\nonumber
 N^{(5)}(X,\xi,Z) & = g([\xi,\varphi Z]^\top - \varphi[\xi,Z],\, \tilde Q X),\\
\nonumber
 N^{(5)}(\xi,Y,Z) &= g([\xi, \varphi Z]^\top, \tilde QY) -g([\xi,\varphi Y]^\top, \tilde QZ),\\
 N^{(5)}(\xi,\xi,Z) &= N^{(5)}(\xi,Y,\xi)=0.
\end{align}
\end{lemma}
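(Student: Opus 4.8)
The plan is to derive the formula for $2\,g((\nabla_X\varphi)Y,Z)$ by following the classical strategy used to prove \cite[Lemma~6.1]{blair2010riemannian}, but carefully tracking the extra terms that arise because $Q\neq\mathrm{id}$. The starting point is the well-known identity expressing the covariant derivative of the fundamental form in terms of its exterior derivative, the Nijenhuis torsion, and Lie-derivative quantities; in the classical case this reads
\begin{align*}
 2\,g((\nabla_X\varphi)Y,Z) &= 3\,d\Phi(X,\varphi Y,\varphi Z) - 3\,d\Phi(X,Y,Z) + g(N^{(1)}(Y,Z),\varphi X)\\
 &+ N^{(2)}(Y,Z)\,\eta(X) + 2\,d\eta(\varphi Y,X)\,\eta(Z) - 2\,d\eta(\varphi Z,X)\,\eta(Y).
\end{align*}
First I would write out $2\,g((\nabla_X\varphi)Y,Z)$ using the Koszul formula \eqref{3.2} applied to $\Phi(Y,Z)=g(Y,\varphi Z)$ and to $\Phi(\varphi Y,\varphi Z)$, together with the relation $(\nabla_X\Phi)(Y,Z)=g((\nabla_X\varphi)Y,Z)$. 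The key algebraic input is the defining relation \eqref{2.2}, which in the weak setting produces $g(\varphi Y,\varphi Z)=g(Y,QZ)-\eta(Y)\eta(QZ)$ rather than the classical $g(Y,Z)-\eta(Y)\eta(Z)$; every place where the classical proof invokes $\varphi^2=-\mathrm{id}+\eta\otimes\xi$ now yields $\varphi^2=-Q+\eta\otimes Q\xi$ via \eqref{2.1}, and it is precisely these $Q$-versus-$\mathrm{id}$ discrepancies that must be collected into a correction term.

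The second step is to isolate that correction. Writing $Q=\mathrm{id}+\tilde Q$ as in \eqref{E-tildeQ}, each deviation from the classical computation contributes a term linear in $\tilde Q$. I would substitute $\varphi^2=-\mathrm{id}-\tilde Q+\eta\otimes\xi+\eta\otimes\tilde Q\xi$ and, using $\tilde Q\xi=(\nu-1)\xi$ and $[\tilde Q,\varphi]=0$, rewrite the $3\,d\Phi$ terms and the Nijenhuis contributions so that the ``classical part'' reproduces exactly the six-term right-hand side above, leaving a residue. The anticipated form of the residue is a sum of three kinds of expressions: derivatives of $g(X^\top,\tilde Q\,\cdot)$ in the $\varphi Y$ and $\varphi Z$ directions, pairings of $\tilde Q Y$ or $\tilde Q Z$ against brackets $[X,\varphi Z]^\top$ and $[X,\varphi Y]^\top$, and a pairing of $\tilde QX$ against the combination $[Y,\varphi Z]^\top-[Z,\varphi Y]^\top-\varphi[Y,Z]$. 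I would verify that this residue is exactly $N^{(5)}(X,Y,Z)$ as defined in the statement, checking in particular its skew-symmetry in $Y$ and $Z$, which follows from the antisymmetry built into $d\Phi$ and $N^{(1)}$.

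The main obstacle is the bookkeeping of the projection operator $(\cdot)^\top=\mathrm{id}-\eta\otimes\xi$ and the $\nu\neq1$ factor: when expanding brackets such as $[X,\varphi Y]$ one must separate the $\mathcal D$-component from the $\xi$-component, because \eqref{2.2} only makes $Q|_{\mathcal D}$ directly accessible while $\tilde Q\xi=(\nu-1)\xi$ is handled separately through $g(\tilde Q\xi,\xi)=\nu-1$. Getting the $\eta(X)$-, $\eta(Y)$-, $\eta(Z)$-proportional pieces to reorganize correctly into the $X^\top$, $\tilde Q$ structure is where the classical argument needs the most care to generalize. Once the full expansion is assembled, the three special cases in \eqref{KK} follow by direct substitution: setting $Y=\xi$ or $Z=\xi$ and using $\varphi\xi=0$, $\eta\circ\varphi=0$ (from Proposition~1.1(a)) together with $\tilde Q\xi=(\nu-1)\xi$ collapses most terms, and the vanishing $N^{(5)}(\xi,\xi,Z)=N^{(5)}(\xi,Y,\xi)=0$ is immediate from the skew-symmetry in the last two arguments combined with $\varphi\xi=0$.
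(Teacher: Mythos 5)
Your proposal is correct and follows essentially the same route as the paper's own proof: expand $2\,g((\nabla_X\varphi)Y,Z)$ via the Koszul formula \eqref{3.2}, use the compatibility relation \eqref{2.2} (in the paper, via the identity $g(X,Z)=\Phi(\varphi X,Z)+\eta(X)\,\eta(Z)-g(X^\top,\tilde QZ)$ applied six times) together with $Q={\rm id}+\tilde Q$ to reproduce the classical six-term expression plus a residue, and identify that residue as $N^{(5)}$, with the special cases \eqref{KK} obtained by direct substitution using $\varphi\,\xi=0$ and $\tilde Q\,\xi=(\nu-1)\,\xi$. The only cosmetic difference is that the paper also rewrites $g(N^{(1)}(Y,Z),\varphi X)$ explicitly to extract its $\tilde Q$-contribution, a step your plan subsumes in the bookkeeping.
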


\begin{proof}
Using \eqref{3.2} and the skew-symmetry of $\varphi$, one can compute
\begin{align}\label{3.4}
2\,g((\nabla_{X}\,\varphi)Y,Z) =\,& 2\,g(\nabla_{X}(\varphi Y),Z) + 2\,g( \nabla_{X}Y,\varphi Z) \notag\\
=\,& X\,g(\varphi Y,Z) + (\varphi Y)\,g(X,Z) - Z\,g(X,\varphi Y) \notag\\
& +\,g([X,\varphi Y],Z) +g([Z,X],\varphi Y) - g([\varphi Y,Z],X) \notag\\
& +\,X\,g(Y,\varphi Z) + Y\,g(X,\varphi Z) - (\varphi Z)\,g(X,Y) \notag\\
& +\,g([X,Y],\varphi Z) + g([\varphi Z,X],Y) - g([Y,\varphi Z],X).
\end{align}
Using \eqref{2.2}, we obtain
\begin{align}\label{XZ}
\notag
 g(X,Z) &= \Phi(\varphi X, Z) -g(X,\tilde Q Z) +\eta(X)\,\eta(Z) +\eta(X)\,\eta(\tilde Q Z)\\
 &= \Phi(\varphi X, Z) + \eta(X)\,\eta(Z)  - g(X^\top, \tilde QZ).
\end{align}
Thus, and in view of the skew-symmetry of $\varphi$ and applying \eqref{XZ} six times, \eqref{3.4} can be written as
\begin{align*}
& 2\,g((\nabla_{X}\,\varphi)Y,Z) = X\,\Phi(Y, Z) \\
&+ (\varphi Y)\,\big(\Phi(\varphi X, {Z})+\eta(X)\,\eta(Z) \big) - (\varphi Y)\,g(X^\top,\tilde QZ)
- Z\,\Phi(X,Y) \\
&- \Phi([X,\varphi Y],\varphi {Z}) + \eta([X,\varphi Y])\eta(Z) - g([X,\varphi Y]^\top,\tilde QZ)
+\Phi([Z,X],Y) \notag\\
&+ \Phi([\varphi Y,Z],\varphi {X}) - \eta([\varphi Y,Z])\,\eta(X) + g([\varphi Y, Z]^\top, \tilde QX)
+ X\,\Phi(Y,Z)
+ Y\,\Phi(X,Z) \\
& - (\varphi Z)\,\big(\Phi(\varphi X, {Y}) + \eta(X)\,\eta(Y)\big) + (\varphi Z) g(X^\top, \tilde QY)
+ \Phi([X,Y],Z) \\
&+ g(\varphi[\varphi Z,X],\varphi {Y}) + \eta([\varphi Z,X])\,\eta(Y) - g([\varphi Z,X]^\top,\tilde QY)\\
&- g(\varphi[Y,\varphi Z],\varphi {X}) - \eta([Y,\varphi Z])\,\eta(X) + g([Y,\varphi Z]^\top, \tilde QX) .
\end{align*}
Recall the co-boundary formula for exterior derivative $d$ on a $2$-form $\Phi$,
\begin{align}\label{3.3}
 d\Phi(X,Y,Z) &= \frac{1}{3}\,\big\{ X\,\Phi(Y,Z) + Y\,\Phi(Z,X) + Z\,\Phi(X,Y) \notag\\
 &-\Phi([X,Y],Z) - \Phi([Z,X],Y) - \Phi([Y,Z],X)\big\}.
\end{align}
We also have
\begin{eqnarray*}
  g(N^{(1)}(Y,Z),\varphi X) = g(\varphi^2 [Y,Z] + [\varphi Y, \varphi Z] - \varphi[\varphi Y,Z] - \varphi[Y,\varphi Z], \varphi X)\\
  = g(\varphi[Y,Z], \tilde Q X) + g([\varphi Y, \varphi Z] - \varphi[\varphi Y,Z] - \varphi[Y,\varphi Z] - [Y,Z], \varphi X).
\end{eqnarray*}
From this and \eqref{3.3} we get the required result.
\end{proof}

According to Theorem~\ref{thm6.2}, on a weak contact metric manifold, we get
\[
\Phi=d\eta,\quad N^{(2)}= N^{(4)}=0.
\]
Thus, invoking \eqref{2.3} and using $d^2=0$ and Proposition~\ref{prop2.1} in Lemma~\ref{lem6.1}, we obtain

\begin{corollary}\label{cor3.1}
For a weak contact metric structure $(\varphi,Q,\xi,\eta,g)$, the covariant derivative of $\varphi$ is given by
\begin{align*}
 2\,g((\nabla_{X}\,\varphi)Y,Z) = g(N^{(1)}(Y,Z),\varphi X) + 2\,d\eta(\varphi Y,X)\,\eta(Z) - 2\,d\eta(\varphi Z,X)\,\eta(Y) + N^{(5)}(X,Y,Z).
\end{align*}
In particular, we have
\begin{align}\label{3.1AA}
 2\,g((\nabla_{\xi}\,\varphi)Y,Z) &= N^{(5)}(\xi,Y,Z).
\end{align}
\end{corollary}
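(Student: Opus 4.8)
The plan is to obtain the corollary as a direct specialization of the master identity of Lemma~\ref{lem6.1} to the weak contact metric setting, in which three of its seven terms are forced to vanish. First I would recall that identity,
\begin{align*}
 2\,g((\nabla_{X}\,\varphi)Y,Z) &= 3\,d\Phi(X,\varphi Y,\varphi Z) - 3\, d\Phi(X,Y,Z) + g(N^{(1)}(Y,Z),\varphi X)\\
 &+ N^{(2)}(Y,Z)\,\eta(X) + 2\,d\eta(\varphi Y,X)\,\eta(Z) - 2\,d\eta(\varphi Z,X)\,\eta(Y) + N^{(5)}(X,Y,Z),
\end{align*}
and then strike out the terms that cannot survive the hypotheses.

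The two $d\Phi$ terms vanish because of the defining relation \eqref{2.3}, $\Phi = d\eta$: applying the exterior derivative gives $d\Phi = d(d\eta) = d^2\eta = 0$, so $d\Phi(X,\varphi Y,\varphi Z)$ and $d\Phi(X,Y,Z)$ are both identically zero. The term $N^{(2)}(Y,Z)\,\eta(X)$ drops out because Theorem~\ref{thm6.2} asserts $N^{(2)}=0$ on a weak contact metric manifold. What is left is exactly
\begin{align*}
 2\,g((\nabla_{X}\,\varphi)Y,Z) = g(N^{(1)}(Y,Z),\varphi X) + 2\,d\eta(\varphi Y,X)\,\eta(Z) - 2\,d\eta(\varphi Z,X)\,\eta(Y) + N^{(5)}(X,Y,Z),
\end{align*}
the first asserted formula.

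To obtain the special case \eqref{3.1AA} I would substitute $X=\xi$ into this formula and simplify the resulting terms one at a time. The term $g(N^{(1)}(Y,Z),\varphi\,\xi)$ vanishes since $\varphi\,\xi=0$ (part (a) of the Proposition on $\varphi$). For the two remaining $d\eta$ terms I would use Proposition~\ref{prop2.1}, which yields $\iota_{\,\xi}\,d\eta=0$ on a weak contact metric manifold; since $d\eta$ is a $2$-form, its antisymmetry converts this into $d\eta(\varphi Y,\xi) = -d\eta(\xi,\varphi Y)=0$ and likewise $d\eta(\varphi Z,\xi)=0$, so both terms carrying a factor $\eta$ disappear. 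Only $N^{(5)}(\xi,Y,Z)$ survives, giving $2\,g((\nabla_{\xi}\,\varphi)Y,Z)=N^{(5)}(\xi,Y,Z)$.

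Because the corollary is a substitution into an already established formula, I do not expect a genuine obstacle; all the real work is contained in Lemma~\ref{lem6.1} and Theorem~\ref{thm6.2}. The one point requiring mild care is the sign bookkeeping in the final step: one must use the antisymmetry of $d\eta$ to pass from the contraction $\iota_{\,\xi}\,d\eta=0$, which places $\xi$ in the first slot, to the vanishing of $d\eta(\varphi Y,\xi)$ and $d\eta(\varphi Z,\xi)$, where $\xi$ sits in the second slot.
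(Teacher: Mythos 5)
Your proposal is correct and coincides with the paper's own derivation: the paper likewise specializes Lemma~\ref{lem6.1} using $\Phi=d\eta$ with $d^2=0$ to kill the $d\Phi$ terms, Theorem~\ref{thm6.2} to kill the $N^{(2)}$ term, and then obtains \eqref{3.1AA} by setting $X=\xi$, using $\varphi\,\xi=0$ and $\iota_{\,\xi}\,d\eta=0$ from Proposition~\ref{prop2.1}. Your explicit remark about the antisymmetry of $d\eta$ in the last step is a detail the paper leaves implicit, but there is no substantive difference.
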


\begin{example}\rm
Consider a weak almost contact metric structure $(\varphi,Q,\xi,\eta,g)$ with
\[
 Q\,|_{\,{\mathcal D}}=\lambda\,{\rm id}_{\mathcal D}
\]
for a positive constant $\lambda$.
Using $X^\top = X - \eta(X)\,\xi$, we obtain
\begin{equation}\label{E-lambda-N2}
 \tilde QX = (\lambda-1)X^\top +  (\nu-1)\,\eta(X)\,\xi),\quad X\in \mathfrak{X}_M.
\end{equation}
Using \eqref{E-lambda-N2}, we find expressions of the tensor $N^{(5)}$ (see Lem\-ma~\ref{lem6.1})
\begin{align*}
 N^{(5)}(X,Y,Z) & = (\lambda-1)\,\big\{(\varphi Z)\,g(X^\top,Y) - (\varphi Y)\,g(X^\top,Z)
               +g([X,\varphi Z]^\top,Y) \\
&              - g([X,\varphi Y]^\top,Z) + g([Y,\varphi Z]^\top - [Z,\varphi Y]^\top - \varphi[Y,Z],X)\big\},\\
 N^{(5)}(\xi,Y,Z) & =(\lambda-1)\,\big\{g([\xi,\varphi Z]^\top,Y) - g([\xi,\varphi Y]^\top,Z)\big\},\\
 N^{(5)}(X,\xi,Z) & =(\lambda-1)\,\big\{g([\xi,\varphi Z]^\top - \varphi[\xi,Z],X)\big\}.
\end{align*}
\end{example}

\section{The tensor field $h$}
\label{sec:3a}

The tensor field $h=\frac{1}{2}\,\pounds_{\xi}\,\varphi$ plays a important role for contact metric manifolds because of its remarkable properties,
e.g., \cite{blair2010riemannian}. Here we study its generalization.
We define the tensor field $h$ on a weak contact metric manifold similarly as for a contact metric manifold,
\begin{align}\label{4.1}
 h=\frac{1}{2}\, N^{(3)} = \frac{1}{2}\,\pounds_{\xi}\,\varphi.
\end{align}
We compute
\begin{align}\label{4.2}
\nonumber
 (\pounds_{\xi}\,\varphi)X &\overset{\eqref{3.3B}}
 = \nabla_{\xi}(\varphi X) - \nabla_{\varphi X}\,\xi - \varphi(\nabla_{\xi}X - \nabla_{X}\,\xi)\notag\\
 &= (\nabla_{\xi}\,\varphi)X - \nabla_{\varphi X}\xi + \varphi\nabla_X\, \xi.
\end{align}
Taking $X=\xi$ in \eqref{4.2} and using $\nabla_{\xi}\,\xi=0$ (see Proposition~\ref{prop2.1}) and $(\nabla_{\xi}\,\varphi)\,\xi=\frac12\,N^{(5)}(\xi,\xi,\,\cdot)=0$, see \eqref{3.1AA} and \eqref{KK}, we get
\begin{align}\label{4.2b}
 h\,\xi=0.
\end{align}
From \eqref{4.2}, using
\begin{align*}
 2\,g((\nabla_{\xi}\,\varphi)Y,\xi) \overset{\eqref{3.1AA}}= N^{(5)}(\xi,Y,\xi) =0,
\end{align*}
we conclude that the distribution ${\cal D}$ is invariant under $h$.

On a contact manifold, $h$ is a symmetric linear operator that anticommutes with $\varphi$, see \cite[Lemma~6.2]{blair2010riemannian}.
The following lemma generalizes this results.

\begin{lemma}\label{L3.1}
On a weak contact metric manifold, the tensor $h$ satisfies
\begin{eqnarray}
\label{E-31}
 g((h-h^*)X,Y) & = & g([\xi, \varphi Y]^\top, \tilde QX) -g([\xi,\varphi X]^\top, \tilde QY),\\
 \label{E-31A}
 (h\varphi+\varphi h)X &=& \frac12\,\big([\tilde QX,\,\xi] -\tilde Q[X,\,\xi]\big),\\
\label{E-30}
 g(Q\,\nabla_{X}\,\xi, Z) &=& g((\varphi+h\varphi) Z,QX) - \frac 12\,N^{(5)}(X,\xi,\varphi Z),
\end{eqnarray}
where $h^*$ is the conjugate operator to $h$ and the tensor $N^{(5)}(X,\xi,Z)$ is given in \eqref{KK}.
\end{lemma}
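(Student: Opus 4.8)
The three identities all hinge on the definition $2h=\pounds_{\xi}\varphi$ together with \eqref{3.3B} and its covariant form \eqref{4.2}, supplemented by the weak contact facts from Proposition~\ref{prop2.1} ($\iota_{\,\xi}\,d\eta=0$ and $\nabla_\xi\xi=0$), the contact condition $\Phi=d\eta$ in \eqref{2.3}, and the pointwise relations $\varphi\,\xi=0$, $\varphi^2=-Q+\eta\otimes Q\,\xi$, $Q\,\xi=\nu\,\xi$, $[Q,\varphi]=0$ and the symmetries \eqref{E-Q2-g}. So the plan is to reduce each left-hand side to a quantity already under control: either $\nabla_\xi\varphi$ (hence $N^{(5)}(\xi,\cdot,\cdot)$ via \eqref{3.1AA}), or the Nijenhuis tensor evaluated against $\xi$.

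I would start with \eqref{E-31A}, which is purely algebraic. Writing $2(h\varphi+\varphi h)X=(\pounds_\xi\varphi)(\varphi X)+\varphi(\pounds_\xi\varphi)X$ and expanding both terms by \eqref{3.3B}, the two copies of $\varphi[\xi,\varphi X]$ cancel, leaving $[\xi,\varphi^2 X]-\varphi^2[\xi,X]$. Substituting $\varphi^2=-Q+\eta\otimes Q\,\xi$ and splitting $Q=\mathrm{id}+\tilde Q$ via \eqref{E-tildeQ}, the identity-parts give $-[\xi,X]+[\xi,X]=0$, the $\tilde Q$-parts assemble into $[\tilde QX,\xi]-\tilde Q[X,\xi]$, and the remaining $\eta\otimes Q\,\xi$ contribution is $\nu\,\big(\xi(\eta(X))-\eta([\xi,X])\big)\,\xi=\nu\,(\pounds_\xi\eta)(X)\,\xi$, which vanishes because $(\pounds_\xi\eta)(X)=2\,d\eta(\xi,X)=0$ by \eqref{2.9X} and \eqref{2.Q3}. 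Dividing by $2$ gives \eqref{E-31A}.

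For \eqref{E-31} I would expand $g((h-h^*)X,Y)=\tfrac12\{g((\pounds_\xi\varphi)X,Y)-g(X,(\pounds_\xi\varphi)Y)\}$ through \eqref{4.2}, then substitute the standard splitting $g(\nabla_A\xi,B)=\tfrac12(\pounds_\xi g)(A,B)+\Phi(A,B)$, whose antisymmetric part is $\Phi=d\eta$ and symmetric part is $\tfrac12\,\pounds_\xi g$. The terms carrying $\pounds_\xi g$ cancel once its symmetry is used, and the algebraic terms of the form $g(\varphi^2\cdot,\cdot)+g(\varphi\cdot,\varphi\cdot)$ vanish by \eqref{2.2} together with $Q\,\xi=\nu\,\xi$ and the self-adjointness of $Q$. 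What survives is the antisymmetrised $\nabla_\xi\varphi$, namely $g((\nabla_\xi\varphi)X,Y)-g((\nabla_\xi\varphi)Y,X)$, which \eqref{3.1AA} rewrites through $N^{(5)}(\xi,\cdot,\cdot)$; unpacking the latter by \eqref{KK} yields the right-hand side of \eqref{E-31}.

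The formula \eqref{E-30} is the most involved. First I would observe that $\varphi\,\xi=0$ gives $\varphi\,\nabla_X\xi=-(\nabla_X\varphi)\xi$, and applying $\varphi$ once more, together with $\eta(\nabla_X\xi)=\tfrac12 X(g(\xi,\xi))=0$, turns $\varphi^2=-Q+\eta\otimes Q\,\xi$ into $Q\,\nabla_X\xi=\varphi(\nabla_X\varphi)\xi$, so that $g(Q\,\nabla_X\xi,Z)=-g((\nabla_X\varphi)\xi,\varphi Z)$. I would then feed $Y=\xi$ and third slot $\varphi Z$ into Corollary~\ref{cor3.1}, using $\varphi\,\xi=0$, $\iota_{\,\xi}\,d\eta=0$ and $\varphi^2Z=-QZ+\nu\,\eta(Z)\,\xi$ to collapse it to three pieces: an $N^{(5)}(X,\xi,\varphi Z)$ term (exactly the one in \eqref{E-30}), a $d\eta(QZ,X)$ term, and an $N^{(1)}(\xi,\varphi Z)$ term. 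The $d\eta(QZ,X)$ term matches $g(\varphi Z,QX)$ once $[Q,\varphi]=0$, self-adjointness of $Q$ and $\Phi=d\eta$ are used. The hard part is the Nijenhuis term: it need not vanish on a weak contact metric manifold, so I would establish the auxiliary identity $N^{(1)}(X,\xi)=2\,\varphi hX$ (from \eqref{2.6X}, \eqref{2.5}, $\varphi\,\xi=0$, $d\eta(X,\xi)=0$ and $2h=\pounds_\xi\varphi$) and use it as $N^{(1)}(\xi,\varphi Z)=-2\,\varphi h\varphi Z$; then \eqref{2.2} with $h\varphi Z\in{\cal D}$ converts this term precisely into $g(h\varphi Z,QX)$. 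Recognising that the Nijenhuis contribution reorganises into $h\varphi$ rather than dropping out is the main obstacle; once it is in hand, collecting the three pieces gives \eqref{E-30}.
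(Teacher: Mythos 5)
Correct, and essentially the paper's own proof: you obtain \eqref{E-31} by antisymmetrizing the expansion \eqref{4.2} and invoking \eqref{3.1AA}/\eqref{KK} (the paper's \eqref{4.3}--\eqref{4.3b}, with your $\pounds_\xi g$/$\Phi$ splitting just making the paper's implicit cancellation explicit); you obtain \eqref{E-31A} by differentiating $\varphi^2=-Q+\eta\otimes Q\,\xi$ along $\xi$ (via Lie brackets where the paper uses $\nabla_\xi$ --- a cosmetic difference); and you obtain \eqref{E-30} from Corollary~\ref{cor3.1} with $Y=\xi$ plus the identification of the Nijenhuis term as $2\,\varphi h$ (the paper's \eqref{4.4A}--\eqref{4.5}), substituting $\varphi Z$ at the start rather than at the end. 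One remark: tracked carefully, your derivation of \eqref{E-31} (exactly like the paper's) actually lands on $\tfrac12\,N^{(5)}(\xi,X,Y)$ on the right-hand side, so the factor-of-two mismatch with the stated formula is inherited from the paper rather than introduced by you.
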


\begin{proof}
(i) The scalar product of \eqref{4.2} with $Y$ for $X,Y\in{\cal D}$, using \eqref{4.2b} and Corollary~\ref{cor3.1}, gives
\begin{align}\label{4.3}
 g((\pounds_{\xi}\,\varphi)X,Y) &= \frac{1}{2}\big\{g([\xi, \varphi Y]^\top, \tilde QX) -g([\xi,\varphi X]^\top, \tilde QY)\big\}
 +g(\varphi\nabla_{X}\,\xi - \nabla_{\varphi X}\xi, Y).
\end{align}
Similarly,
\begin{align}\label{4.3b}
 g((\pounds_{\xi}\,\varphi)Y,X) &= \frac{1}{2}\big\{g([\xi, \varphi X]^\top, \tilde QY) -g([\xi,\varphi Y]^\top, \tilde QX)\big\}
 +g(\varphi\nabla_{Y}\,\xi - \nabla_{\varphi Y}\,\xi, X).
\end{align}
The difference of \eqref{4.3} and \eqref{4.3b} gives \eqref{E-31}.

(ii)
Using $\nabla_\xi\,\xi=0$ (see Proposition~\ref{prop2.1}), $\nabla_\xi\,\eta=0$ and $(\nabla_\xi\,\tilde Q)\,\xi=0$, we obtain
\begin{eqnarray*}
 & \varphi(\nabla_\xi\,\varphi)+(\nabla_\xi\,\varphi)\varphi = \nabla_\xi\,(\varphi^2)
 = -\nabla_\xi\,\tilde Q +\nabla_\xi\,(\eta\otimes Q\,\xi)\\
 & = -\nabla_\xi\,\tilde Q +(\nabla_\xi\,\eta)\otimes Q\,\xi +\eta\otimes (\nabla_\xi\,\tilde Q)\,\xi
 = -\nabla_\xi\,\tilde Q .
\end{eqnarray*}
By this and \eqref{4.2}, we get
\begin{eqnarray*}
 2(h\varphi+\varphi h)X & = & \varphi(\pounds_{\xi}\,\varphi)X +(\pounds_{\xi}\,\varphi)\varphi X  \\
 & = & \varphi(\nabla_\xi\,\varphi)X +(\nabla_\xi\,\varphi)\varphi X +\varphi^2\nabla_X\,\xi -\nabla_{\varphi^2 X}\,\xi\\
 & = & -(\nabla_\xi\,\tilde Q)X -\tilde Q\nabla_X\,\xi+\nabla_{\tilde QX}\,\xi,
\end{eqnarray*}
that provides \eqref{E-31A}.
Note that $(h\varphi+\varphi h)\,\xi = 0$.

(iii) From Corollary \ref{cor3.1} with $Y=\xi$, we find
\begin{align}\label{4.4}
2\,g((\nabla_{X}\,\varphi)\xi,Z) &=  g(N^{(1)}(\xi,Z),\varphi X) - 2\,d\eta(\varphi Z,X) + N^{(5)}(X,\xi,Z).
\end{align}
From \eqref{2.5} with $Y=\xi$, we get
\begin{align}\label{2.5B}
 [\varphi,\varphi](X,\xi) = \varphi^2 [X,\xi] - \varphi[\varphi X,\xi] .
\end{align}
Using \eqref{2.2}, \eqref{2.5B} and \eqref{3.3B}, we calculate
\begin{align}
\label{4.4A}
 g([\varphi,\varphi](\xi,Z),\varphi X)&= g(\varphi^2\,[\xi,Z] - \varphi[\xi,\varphi Z],\varphi X)\notag\\
 &= - g(\varphi(\pounds_{\xi}\,\varphi Z-\pounds_{\varphi Z}\,\xi),\varphi X)\notag\\
 &= - g(\varphi(\pounds_{\xi}\,\varphi)Z,\varphi X)\notag\\
 &= - g((\pounds_{\xi}\,\varphi)Z,QX) +\eta(X)\,\eta(Q(\pounds_{\xi}\,\varphi)Z) .
 \end{align}
Since $\nabla_{\xi}\,\xi=0$ and $Q\xi=\nu\,\xi$, from \eqref{4.2} we get
\begin{align}\label{4.5}
 g(Q(\pounds_{\xi}\,\varphi)X,\xi) = \nu\,\big\{g((\nabla_{\xi}\,\varphi)X,\xi)-g(\nabla_{\varphi X}\,\xi,\xi) + g(\varphi(\nabla_{X}\,\xi),\xi)\big\}=0.
\end{align}
Since $\varphi\,\xi=0$, we find
\begin{align}\label{4.5A}
 (\nabla_{X}\,\varphi)\,\xi=-\varphi\,\nabla_{X}\,\xi.
\end{align}
Thus, combining \eqref{4.1}, \eqref{4.4}, \eqref{4.4A} and \eqref{4.5}, we deduce
\begin{align}\label{4.6}
 -g(\varphi\,\nabla_{X}\,\xi, Z) = -g(hZ,QX)
 -g(Z,QX) + \eta(Z)\,\eta(QX) + \frac 12\,N^{(5)}(X,\xi,Z) .
\end{align}
Since $Q$ is symmetric and $\varphi$ is skew-symmetric, replacing $Z$ by $\varphi Z$ in \eqref{4.6} and using \eqref{2.1}, $g(\nabla_{X}\,\xi,\xi)=0$ and $\varphi\,\xi=0$, we achieve \eqref{E-30}.
\end{proof}

Set
\begin{align}\label{E-5.0}
 A=h\varphi+\varphi h,\quad
 B= h^* - h.
\end{align}
where operators $A,B$ are represented in Lemma~\ref{L3.1},
and $A=B=0$ for a contact structure.
For a weak Sasakian manifold we have $h=0$ (see Theorem~\ref{thm6.1}), hence $A=B=0$ (see \eqref{E-5.0}).

\begin{proposition}
For a weak contact metric structure $(\varphi,Q,\xi,\eta,g)$ on $M^{2n+1}$ we obtain
\begin{align}\label{E-5.1}
 N^{(5)}(\varphi^2 Y,\xi, X) - N^{(5)}(\varphi X,\xi, \varphi Y) = 2\,g( ((h^* - h)\varphi + 2\,\varphi h) X ,\ Y) .
\end{align}
\end{proposition}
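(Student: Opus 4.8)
The plan is to reduce the two occurrences of $N^{(5)}$ to a single bilinear form and then match the result against the right-hand side with the help of Lemma~\ref{L3.1}. The starting observation is that on a weak contact metric manifold the third-slot expression in \eqref{KK} collapses: since $\iota_{\,\xi}\,d\eta=0$ by Proposition~\ref{prop2.1}, equation \eqref{3.3C} gives $\eta([\xi,\varphi Z])=-2\,d\eta(\xi,\varphi Z)=0$, so $[\xi,\varphi Z]^\top=[\xi,\varphi Z]$ and hence $[\xi,\varphi Z]^\top-\varphi[\xi,Z]=N^{(3)}(Z)=2\,hZ$. Consequently \eqref{KK} simplifies to $N^{(5)}(X,\xi,Z)=2\,g(hZ,\tilde QX)$, which I would record first as the working form of the tensor.

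Next I would substitute into the two terms, obtaining $N^{(5)}(\varphi^2 Y,\xi,X)=2\,g(hX,\tilde Q\varphi^2 Y)$ and $N^{(5)}(\varphi X,\xi,\varphi Y)=2\,g(h\varphi Y,\tilde Q\varphi X)$. I would then move $\tilde Q$ onto the $h$-factor using that $\tilde Q$ is self-adjoint by \eqref{E-Q2-g} and commutes with $\varphi$ (a property noted after \eqref{E-tildeQ}), expand $\varphi^2=-Q+\eta\otimes Q\,\xi$ from \eqref{2.1}, and discard the $Q\,\xi$-component since $hX\in{\cal D}$ (so $hX\perp\xi$) and $h\,\xi=0$; here \eqref{2.1Q-nu} guarantees $\tilde Q\,Q\,\xi$ is proportional to $\xi$. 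The first term then becomes $-2\,g(Q\,\tilde Q\,hX,Y)$, and the second, after carrying $\varphi$ across by skew-symmetry and taking the adjoint of $\varphi\,\tilde Q\,h\,\varphi$, becomes $-2\,g(\varphi\,\tilde Q\,h^*\varphi X,Y)$. Thus the entire left-hand side takes the form $2\,g(\mathcal{L}X,Y)$ with $\mathcal{L}=-Q\,\tilde Q\,h+\varphi\,\tilde Q\,h^*\varphi$, an operator assembled from $Q,\tilde Q,\varphi,h,h^*$.

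The final and hardest step is to identify $\mathcal{L}$ with $(h^*-h)\varphi+2\,\varphi h$. Here I would feed the $\tilde Q$-weighted combinations back through \eqref{E-31}, which is precisely the formula expressing $g((h-h^*)X,Y)$ as a difference of $\tilde Q$-paired bracket terms; rewriting those brackets once more through $[\xi,\varphi Z]^\top=2hZ+\varphi[\xi,Z]$ should convert the antisymmetric part of the $\tilde Q$-dependence into $(h^*-h)\varphi$, while the symmetric leftover is meant to collapse to $2\,\varphi h$ after substituting $Q=\mathrm{id}+\tilde Q$. I expect this matching to be the principal obstacle: the bookkeeping of the $\tilde Q$-terms and the careful tracking of which operator is adjointed at each step is delicate, and it is exactly where Lemma~\ref{L3.1}, rather than pure algebra, must be invoked. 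By contrast, the algebraic inputs used earlier (skew-symmetry of $\varphi$, self-adjointness and $\varphi$-commutativity of $\tilde Q$, $h\,\xi=0$, and the collapse of \eqref{KK} via Proposition~\ref{prop2.1}) are routine once the reduction of the first two paragraphs is in place.
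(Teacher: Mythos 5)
Your first two steps are correct and constitute a genuinely different route from the paper's proof, which never reduces $N^{(5)}(\cdot,\xi,\cdot)$ at all but instead starts from \eqref{E-d-eta}, substitutes $X=Q\tilde X$, $Y=Q\tilde Y$, and applies \eqref{E-30}. Your collapse $N^{(5)}(X,\xi,Z)=2\,g(hZ,\tilde QX)$ (via Proposition~\ref{prop2.1} and \eqref{KK}) is valid, and your rewriting of the left-hand side is right up to one ordering slip: the second term equals $2\,g(\varphi\, h^*\varphi\,\tilde Q X,Y)$, not $2\,g(\varphi\,\tilde Q\, h^*\varphi X,Y)$; you may not slide $\tilde Q$ past $h^*$, since by \eqref{E-5.00} the commutator $[Q,h]=[A,\varphi]$ is generally nonzero. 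The corrected outcome of your reduction is
\begin{equation*}
 N^{(5)}(\varphi^2Y,\xi,X)-N^{(5)}(\varphi X,\xi,\varphi Y)
 =2\,g\big((\varphi\, h^*\varphi\,\tilde Q-Q\,\tilde Q\, h)X,\,Y\big).
\end{equation*}

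The genuine gap is your third step, which is not a computation but a hope, and it cannot be completed as stated. Your reduction shows that the left-hand side of \eqref{E-5.1} is $\tilde Q$-weighted: it vanishes identically when $Q={\rm id}$. The printed right-hand side does not: for a classical contact metric structure, Lemma~\ref{L3.1} (with $\tilde Q=0$) gives $h^*=h$ and $h\varphi+\varphi h=0$, so the right-hand side of \eqref{E-5.1} becomes $4\,g(\varphi hX,Y)$, which is nonzero on every non-$K$-contact contact metric manifold. Hence no amount of algebra, with or without \eqref{E-31}, can produce the matching you postpone; your correct reduction is in fact incompatible with the displayed formula in the classical limit, which points to an error in the final operator-conversion step of the paper's own proof rather than in your computation. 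The ingredient your plan is missing is the full weak contact condition $\Phi=d\eta$: you use only $\iota_{\,\xi}\,d\eta=0$, which carries no information relating $h^*$ to $h$. The paper injects this condition through \eqref{E-d-eta} and \eqref{E-30}, arriving at \eqref{E-5.01}, which in your language is exactly the generalized anticommutation $Qh\varphi+\varphi h^*Q=0$. Feeding $\varphi h^*Q=-Qh\varphi$ into your reduction (together with \eqref{2.1} and $h\,\xi=0$) gives $\varphi\, h^*\varphi\,\tilde Q=Qh\tilde Q$, so the left-hand side equals
\begin{equation*}
 2\,g\big(Q(h\tilde Q-\tilde Q h)X,\,Y\big)=2\,g\big(Q[h,Q]X,\,Y\big)=-2\,g\big(Q[A,\varphi]X,\,Y\big),
\end{equation*}
an expression that, unlike the printed right-hand side, correctly degenerates to $0=0$ when $Q={\rm id}$. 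Completing your argument therefore requires both the missing use of \eqref{E-30} and a corrected target identity.
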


\begin{proof}
Using the equality
\begin{align}\label{E-d-eta}
 2\,d\eta(X,Y) = g(\nabla_X\,\xi,\,Y) -g(\nabla_Y\,\xi,\,X),
\end{align}
we find
\begin{align*}
 2\,d\eta(X,Y) - 2\,g(X, \varphi Y) & = g(\nabla_X\,\xi, Y)-g(\nabla_Y\,\xi, X) - 2\,g(X, \varphi Y) \\
                                    & = g(Q\nabla_X\,\xi, \tilde Y)-g(Q\nabla_Y\,\xi, \tilde X) - 2\,g(X, \varphi Y),
\end{align*}
where $X=Q\tilde X$ and $Y=Q\tilde Y$ and $X,Y\in \mathfrak{X}_M$.
From this, using \eqref{E-30} and the equality
\[
 g(\varphi\tilde Y, QX)-g(\varphi\tilde X, QY)=2\,g(X, \varphi Y),
\]
we get
\begin{align}\label{E-5.01}
 0 = 2\,d\eta(X,Y) - 2\,g(X, \varphi Y) & = g((h\varphi)\tilde Y, Q X)-g((h\varphi)\tilde X, Q Y) \notag\\
 & -\frac12\big\{N^{(5)}( X,\xi, \varphi\tilde Y) - N^{(5)}( Y,\xi, \varphi\tilde X)\big\} .
\end{align}
Using \eqref{E-5.0}, we find
\begin{align}\label{E-5.00}
 Qh= -\varphi^2 h = -\varphi(A-h\varphi) = -\varphi A +(A-h\varphi)\varphi = hQ+[A,\varphi].
\end{align}
From \eqref{E-5.01}, using \eqref{E-5.0} and \eqref{E-5.00} (and deleting 'tilde' over $X$ and $Y$), we deduce
\begin{align*}
 N^{(5)}(\varphi^2 Y,\xi, \varphi X) - N^{(5)}(\varphi^2 X,\xi, \varphi Y) = 2\,g( (B\varphi+A +[\varphi, h])\varphi X ,\ Y) .
\end{align*}
From this, using \eqref{E-5.0} and replacing $\varphi X$ by $X$, we get \eqref{E-5.1}.
\end{proof}

\section{Weak Sasakian manifolds}
\label{sec:3}

We are based on some classical results, e.g., \cite[Chapter~6]{blair2010riemannian, sasaki1965almost},
and show the equality $C_3= C^{\,w}_{3/\sim}$.

\begin{lemma}
Let $M^{2n+1}(\varphi,Q,\xi,\eta,g)$ be a weak Sasakian manifold. Then
\begin{align}\label{4.10}
 g((\nabla_{X}\,\varphi)Y,Z) &
 = g(QX,Y)\,\eta(Z) - g(QX,Z)\,\eta(Y)
 +\frac{1}{2}\, N^{(5)}(X,Y,Z).
\end{align}
\end{lemma}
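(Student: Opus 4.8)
The statement to prove is the covariant-derivative formula \eqref{4.10} for a weak Sasakian manifold. The plan is to specialize the general weak-contact-metric identity of Corollary~\ref{cor3.1} to the Sasakian (i.e.\ normal) case. Recall that Corollary~\ref{cor3.1} gives
\[
 2\,g((\nabla_{X}\,\varphi)Y,Z) = g(N^{(1)}(Y,Z),\varphi X) + 2\,d\eta(\varphi Y,X)\,\eta(Z) - 2\,d\eta(\varphi Z,X)\,\eta(Y) + N^{(5)}(X,Y,Z),
\]
so the entire task reduces to evaluating the first three terms on the right using normality. First I would invoke the definition of normality: on a weak Sasakian manifold $N^{(1)}=0$ identically, so the term $g(N^{(1)}(Y,Z),\varphi X)$ vanishes outright. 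This immediately removes the Nijenhuis contribution and leaves only the $d\eta$ terms and $N^{(5)}$ to handle.

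\textbf{Evaluating the two remaining $d\eta$ terms.}
The next step is to rewrite $d\eta(\varphi Y,X)$ and $d\eta(\varphi Z,X)$ using the weak contact metric condition \eqref{2.3}, namely $d\eta=\Phi$ with $\Phi(U,V)=g(U,\varphi V)$. Applying this,
\[
 2\,d\eta(\varphi Y,X) = 2\,g(\varphi Y,\varphi X) = 2\big(g(Y,Q X) - \eta(Y)\,\eta(Q X)\big),
\]
where the last equality is the compatibility relation \eqref{2.2}. Since $\eta$ multiplies this term against $\eta(Z)$, and because $Q$ is self-adjoint by \eqref{E-Q2-g}$_2$ with $g(Y,QX)=g(QX,Y)$, the term $2\,d\eta(\varphi Y,X)\,\eta(Z)$ becomes $2\,g(QX,Y)\,\eta(Z)$ up to the $\eta(Y)\eta(QX)\eta(Z)$ correction. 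The analogous computation gives $-2\,g(QX,Z)\,\eta(Y)$ for the third term, with its own correction $+2\,\eta(Z)\eta(QX)\eta(Y)$. The key observation is that the two correction terms are symmetric in the pair $(Y,Z)$ through the common factor $\eta(Y)\eta(Z)\eta(QX)$ and hence cancel exactly. After dividing the whole identity by $2$, one is left precisely with $g(QX,Y)\eta(Z) - g(QX,Z)\eta(Y) + \tfrac12 N^{(5)}(X,Y,Z)$, which is \eqref{4.10}.

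\textbf{The main obstacle and a caution.}
The computation is essentially mechanical once Corollary~\ref{cor3.1} is in hand, so the only real subtlety is bookkeeping of the $\eta$-correction terms coming from \eqref{2.2}: one must confirm that the two pieces $-\eta(Y)\eta(QX)\eta(Z)$ and $+\eta(Z)\eta(QX)\eta(Y)$ cancel rather than reinforce, which they do because of the opposite signs attached to the $\eta(Z)$ and $\eta(Y)$ factors in the two $d\eta$ terms. A second point worth verifying is that Corollary~\ref{cor3.1} is genuinely available here: it requires a weak contact metric structure, and a weak Sasakian manifold is by definition normal \emph{and} weak contact metric, so the hypothesis is met and $N^{(2)}=N^{(4)}=0$ (Theorem~\ref{thm6.2}) have already been used in deriving the corollary. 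I would therefore present the proof as a one-line substitution of $N^{(1)}=0$ together with the $\Phi=d\eta$ rewriting, keeping the self-adjointness of $Q$ explicit and flagging the cancellation of the cubic $\eta$-terms.
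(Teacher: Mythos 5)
Your proof is correct and follows essentially the same route as the paper's: specialize Corollary~\ref{cor3.1} using normality ($N^{(1)}=0$), then rewrite the $d\eta$ terms via $\Phi=d\eta$ and the compatibility relation \eqref{2.2}. The only difference is that you make explicit the cancellation of the two $\eta(Y)\eta(QX)\eta(Z)$ correction terms, a detail the paper leaves to the reader.
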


\begin{proof} Since $(\varphi,Q,\xi,\eta,g)$ is a weak contact metric structure with $N^{(1)}=0$, by Corollary~\ref{cor3.1}, we~get
\begin{eqnarray}\label{4.11}
 g((\nabla_{X}\,\varphi)Y,Z) = d\eta(\varphi Y,X)\,\eta(Z) - d\eta(\varphi Z,X)\,\eta(Y) + \frac{1}{2}\,N^{(5)}(X,Y,Z).
\end{eqnarray}
Using \eqref{2.3} and \eqref{2.2} in \eqref{4.11}, we get \eqref{4.10}.
\end{proof}

Since on a weak contact metric manifold, the tensor $N^{(3)}$ vanishes if and only if $\xi$ is Killing (Theorem~\ref{thm6.2}),
then a weak almost contact metric structure is weak $K$-contact if and only if $h=0$.
Thus we have the following generalization of \cite[Corollary~6.3]{blair2010riemannian}.

\begin{proposition}\label{P-4.1}
 A weak Sasakian manifold is weak $K$-contact.
\end{proposition}

\begin{proof}
In view of \eqref{4.5A}, Eq. \eqref{4.10} with $Y=\xi$ becomes
\begin{align}\label{4.6A}
g(\nabla_{X}\,\xi, \varphi Z) = \eta(X)\,\eta(QZ)
-g(X,QZ) + \frac 12\,N^{(5)}(X,\xi,Z).
\end{align}
Combining \eqref{4.6} and \eqref{4.6A}, we achieve $g(hZ,QX)=0$, which implies $h=0$.
\end{proof}

The main result in this section is the following rigidity of a Sasakian structure.

\begin{theorem}\label{T-4.1}
A weak almost contact metric structure $(\varphi,Q,\xi,\eta,g)$ on $M^{2n+1}$ is weak Sasakian if and only if
it is homothetically equivalent to a Sasakian structure $(\varphi',\xi,\eta,g')$ on $M^{2n+1}$.
\end{theorem}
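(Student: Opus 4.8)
The plan is to prove the two implications separately, with essentially all the content lying in the ``only if'' direction, where the rigidity is extracted directly from the two identities for $N^{(2)}$ that are already available.

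For the ``if'' direction I would simply reverse Proposition~\ref{P-22}(iii). Assuming $(\varphi,Q,\xi,\eta,g)$ is homothetically equivalent to a Sasakian structure $(\varphi',\xi,\eta,g')$, the relations \eqref{Tran'}, \eqref{E-Q'-lambda}, \eqref{Tran2'} together with $\lambda=\lambda'$ read $\varphi=\sqrt\lambda\,\varphi'$, $Q|_{\cal D}=\lambda\,{\rm id}_{\cal D}$, $Q\xi=\lambda\xi$, $g|_{\cal D}=\lambda^{-1/2}g'|_{\cal D}$ and $\iota_\xi g=\iota_\xi g'$. Substituting these into \eqref{2.1}, \eqref{2.2}, \eqref{2.3}, \eqref{2.6X} and using that $(\varphi',\xi,\eta,g')$ satisfies the classical axioms (so $(\varphi')^2=-{\rm id}+\eta\otimes\xi$ and $[\varphi',\varphi']+2\,d\eta\otimes\xi=0$), one verifies by direct computation that $(\varphi,Q,\xi,\eta,g)$ fulfils every weak axiom, i.e. is a normal weak contact metric structure, hence weak Sasakian.

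For the ``only if'' direction the key observation is that a weak Sasakian manifold is \emph{simultaneously} normal ($N^{(1)}=0$) and weak contact metric, so both of our expressions for $N^{(2)}$ apply. By Theorem~\ref{thm6.2} the weak contact metric condition forces $N^{(2)}=0$, while Theorem~\ref{thm6.1} (which needs only $N^{(1)}=0$) gives $N^{(2)}(X,Y)=\eta([\tilde QX^\top,\varphi Y])+(\nu-1)\,\eta([\varphi X,Y])$. Equating these and restricting to $X,Y\in{\cal D}$, I would convert each bracket into $d\eta=\Phi$ via \eqref{3.3A} and then into $g$ via \eqref{2.2}, using $\varphi^2Y=-QY$ and $g(\varphi X,\varphi Y)=g(X,QY)$ on ${\cal D}$; the identity then collapses to $g(QX,QY)=\nu\,g(QX,Y)$ for all $X,Y\in{\cal D}$. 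Since $Q$ is self-adjoint, nonsingular and preserves ${\cal D}$, the vector $QX$ ranges over all of ${\cal D}$, so this says $QY-\nu Y\perp{\cal D}$; as $QY-\nu Y\in{\cal D}$ and $g$ is positive definite, we get $Q|_{\cal D}=\nu\,{\rm id}_{\cal D}$, and with $Q\xi=\nu\xi$ this yields $Q=\nu\,{\rm id}_{\,TM}$.

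Having reduced $Q$ to the scalar $\nu$, I would conclude by invoking Proposition~\ref{P-22}(iii) with $\lambda=\nu$: the structure $(\varphi',\xi,\eta,g')$ given by $\varphi=\sqrt\nu\,\varphi'$ and the metric rescaling \eqref{Tran2} is Sasakian, and by construction $(\varphi,Q,\xi,\eta,g)$ is homothetically equivalent to it, establishing $C_3=C^{\,w}_{3/\sim}$. The only delicate point is the bracket-to-metric bookkeeping in the previous paragraph, in particular checking that the $\eta$-dependent corrections in \eqref{2.2} and in the coboundary formula \eqref{3.3A} drop out once both arguments lie in ${\cal D}$; organized around $d\eta=\Phi$ this is routine, and I do not expect the scalar conclusion $Q|_{\cal D}=\nu\,{\rm id}$ to require any curvature or $\nabla\varphi$ argument.
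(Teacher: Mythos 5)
Your proof is correct, and the heart of the ``only if'' direction is genuinely different from the paper's. The paper works with the Levi-Civita connection: it specializes Corollary~\ref{cor3.1} to the weak Sasakian case (formula \eqref{4.10}) and plays two constraints on the bilinear form $(X,Y)\mapsto N^{(5)}(\varphi X,\xi,Y)$ against each other — the contact condition $\Phi=d\eta$ forces this form to be symmetric (\eqref{4.14}--\eqref{4.16}), while normality, fed through \eqref{4.NN}, pins its skew part to $4\,g(X,\varphi Y)\,g(\tilde Q\xi,\xi)+4\,g(\tilde QX,\varphi Y)$ (\eqref{4.17}--\eqref{4.19}); together these give $g(\nu X-QX,\varphi Y)=0$, i.e. $Q\,|_{\cal D}=\nu\,{\rm id}_{\cal D}$. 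You reach the same pivotal identity without ever touching $\nabla\varphi$ or $N^{(5)}$, by equating the two available computations of $N^{(2)}$ (zero by Theorem~\ref{thm6.2}; the bracket formula of Theorem~\ref{thm6.1}, which needs only $N^{(1)}=0$) and converting brackets on sections of ${\cal D}$ via \eqref{3.3A} and $\Phi=d\eta$: indeed $\eta([\tilde QX,\varphi Y])=2\,g(\tilde QX,QY)$ and $\eta([\varphi X,Y])=-2\,g(X,QY)$, so the identity collapses to $g(QX-\nu X,QY)=0$, and your nonsingularity/self-adjointness/positive-definiteness argument closes it — the bookkeeping you flagged as delicate does check out. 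Both proofs finish identically via Proposition~\ref{P-22}(iii) with $\lambda=\nu$. What your route buys: it is more elementary (Lie brackets and the algebraic axioms only), and it treats the easy ``if'' direction explicitly, which the paper's proof leaves implicit. What the paper's route buys: its intermediate identities are reused elsewhere — \eqref{4.10} also yields Proposition~\ref{P-4.1} (weak Sasakian $\Rightarrow$ weak $K$-contact), and \eqref{4.16} is reinterpreted in the subsequent remark as self-adjointness of $(\varphi^3+\varphi)\,h$ — by-products your bracket-only argument does not produce.
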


\begin{proof}
Let $M^{2n+1}(\varphi,Q,\xi,\eta,g)$ be a weak Sasakian manifold. Replacing $Y$ by $\xi$ in \eqref{4.10}, we get
\begin{align}\label{4.12}
\nonumber
 g((\nabla_{X}\,\varphi)\,\xi,Z) &= \eta(QX)\,\eta(Z) - g(QX,Z) + \frac{1}{2}\,N^{(5)}(X,\xi,Z) \\
 & = - g(Q X^\top, Z) +\frac{1}{2}\,N^{(5)}(X,\xi,Z).
\end{align}
Further, $\varphi\,\xi=0$ gives us $(\nabla_{X}\,\varphi)\,\xi=-\varphi\,\nabla_{X}\,\xi$.
Thus, \eqref{4.12} can be written as
\begin{align*}
 g(\nabla_{X}\,\xi,\varphi Z) = - g(Q X^\top,Z) +\frac{1}{2}\, N^{(5)}(X,\xi,Z).
\end{align*}
By the above and \eqref{2.1}, we find
\begin{align}\label{4.14}
 g(\nabla_X\,\xi +\varphi X^\top, \,\varphi\,Z) = \frac12\,N^{(5)}(X,\xi,Z).
\end{align}
Take $\tilde X,\tilde Y\in{\cal D}$ such that $\varphi\tilde X = X^\top$ and $\varphi\,\tilde Y=Y^\top$.
Using \eqref{E-d-eta} and keeping in mind \eqref{4.14} and \eqref{2.3}, we get
\begin{align}\label{4.15}
 d\eta(X,Y) - g(X, \varphi Y) = \frac14\,\big\{N^{(5)}(\varphi\tilde X,\xi, \tilde Y) - N^{(5)}(\varphi\tilde Y,\xi, \tilde X)\big\}.
\end{align}
By \eqref{4.15}, we conclude that \eqref{2.3} holds if
the bilinear form $(X,Y) \to N^{(5)}(\varphi X,\xi, Y)$ is symmetric:
\begin{align}\label{4.16}
 & N^{(5)}(\varphi X,\xi, Y) = N^{(5)}(\varphi Y,\xi, X) .
\end{align}
Since $\varphi$ is skew-symmetric, applying \eqref{4.10} with $Z=\xi$ in \eqref{4.NN}, we obtain
\begin{align}\label{4.17}
& g( [\varphi,\varphi](X,Y),\xi) = - g((\nabla_{\varphi Y}\,\varphi)X, \xi)  +g((\nabla_{\varphi X}\,\varphi)Y, \xi) \notag\\
& = -g(Q\,\varphi Y,X) + g(Q\,\varphi Y, \xi)\,\eta(X) - \frac{1}{2}\, N^{(5)}(\varphi Y, X, \xi)\notag\\
&  +g(Q\,\varphi X,Y) -g(Q\,\varphi X, \xi)\,\eta(Y)+\frac{1}{2}\,N^{(5)}(\varphi X, Y, \xi).
\end{align}
Recall that $[Q,\,\varphi]=0$. Thus, \eqref{4.17} yields
\begin{align*}
  g( [\varphi,\varphi](X,Y), \xi) = - 2\,g(QX,\varphi Y)
   +\frac{1}{2}\big\{ N^{(5)}(\varphi Y, \xi, X) -N^{(5)}(\varphi X, \xi, Y) \big\}.
\end{align*}
From this, using \eqref{E-tildeQ} and definition of $N^{(1)}$, we get
\begin{align}\label{4.18}
& g(N^{(1)}(X,Y), \xi)  = -2\,g(X,\varphi Y)\,g(\tilde Q\,\xi,\,\xi) -2\,g(\tilde Q X, \varphi Y) \notag\\
& + \frac{1}{2}\big\{ N^{(5)}(\varphi Y, \xi, X) - N^{(5)}(\varphi X, \xi, Y) \big\}.
\end{align}
By \eqref{4.18}, if $N^{(1)}(X,Y)=0$ holds then the following condition is valid for all $X,Y\in \mathfrak{X}_M$:
\begin{align}\label{4.19}
  N^{(5)}(\varphi Y, \xi, X) -N^{(5)}(\varphi X, \xi, Y) = 4\,g(X,\varphi Y)\,g(\tilde Q\,\xi,\,\xi) +4\,g(\tilde Q X, \varphi Y) .
\end{align}
In view of \eqref{4.16} and $g(\tilde Q\,\xi,\,\xi)=\nu-1$, equation \eqref{4.19} reduces to
\begin{align*}
 & 0 = g(X,\varphi Y)\,g(\tilde Q\,\xi,\,\xi) +g(\tilde Q X, \varphi Y)
     = g(\nu X - QX, \varphi Y),
\end{align*}
thus, $Q\,|_{\cal D}=\nu\,{\rm id}|_{\cal D}$.
By Proposition~\ref{P-22}(iii), our structure is homothetically equivalent to a Sasakian structure.
\end{proof}

\begin{remark}\rm
 In~view of \eqref{2.8X}, \eqref{E2-tildeQ} and \eqref{KK}, we get
\[
 N^{(5)}(\varphi X,\xi, Y) = -g(\tilde Q\,\varphi N^{3}(Y), \,X) = -g((\varphi^3+\varphi)\,h(Y), \,X).
\]
Thus, \eqref{4.16} holds if and only if the linear operator $(\varphi^3+\varphi)\,h:TM\to TM$ is self-adjoint.

One can get Proposition~\ref{P-4.1} as a consequence of Theorem~\ref{T-4.1} and Proposition~\ref{P-22}.
\end{remark}

\section{Weak cosymplectic manifolds}
\label{sec:4}

An important class of almost contact manifolds is given by cosymplectic manifolds, e.g., \cite{Cappelletti-Montano2013,goldberg1969integrability}.
Here, we study wider classes of weak (almost) cosymplectic manifolds and characterize the class $C^{\,w}_{4}$ in $C_{4}$ by the condition $\nabla\varphi=0$.

\begin{definition}\rm
A \textit{weak almost cosymplectic} (or a \textit{weak almost coK\"ahler}) manifold is a weak almost contact metric manifold $M^{2n+1}(\varphi,Q,\xi,\eta,g)$, whose fundamental $2$-form $\Phi$ and the 1-form $\eta$ are closed. If a weak almost cosymplectic structure is normal,
we say that $M$ is a \textit{weak cosymplectic} (or a~\textit{weak coK\"ahler}) \textit{manifold}.
\end{definition}

\begin{theorem}\label{thm6.2C}
For a weak almost cosymplectic structure $(\varphi,Q,\xi,\eta,g)$, the tensors $N^{(2)}$ and $N^{(4)}$ vanish.
Moreover, $N^{(1)}=[\varphi,\varphi]$ and $N^{(3)}$ vanishes if and only if $\,\xi$ is a Killing vector field.
\end{theorem}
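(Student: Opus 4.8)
The plan is to mirror the structure of the proof of Theorem~\ref{thm6.2} for the weak contact metric case, adapting it to the weak almost cosymplectic setting where now $d\Phi=0$ and $d\eta=0$ replace the condition $\Phi=d\eta$. First I would dispose of $N^{(4)}$: since $\eta$ is closed, we have $2\,d\eta(\xi,X)=0$ for all $X$, so $N^{(4)}(X)=2\,d\eta(\xi,X)=0$ directly from \eqref{2.9X}. This is immediate and requires no further machinery. The vanishing of $N^{(2)}$ should follow from \eqref{3.6}, which expresses $N^{(2)}$ purely in terms of $d\eta$ applied to $\varphi$-transformed arguments; since $d\eta=0$ identically, both terms $2\,d\eta(\varphi X,Y)$ and $2\,d\eta(\varphi Y,X)$ vanish, giving $N^{(2)}=0$. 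These two facts are the easy part and parallel the first line of the proof of Theorem~\ref{thm6.2}.

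Next I would establish $N^{(1)}=[\varphi,\varphi]$. By the definition \eqref{2.6X}, $N^{(1)}(X,Y)=[\varphi,\varphi](X,Y)+2\,d\eta(X,Y)\,Q\,\xi$; since $\eta$ is closed, the second summand vanishes identically, leaving $N^{(1)}=[\varphi,\varphi]$. This is purely formal and drops out of the closedness of $\eta$.

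The substantive claim is the equivalence between the vanishing of $N^{(3)}$ and $\xi$ being Killing. Here I would reproduce the computation \eqref{3.7}--\eqref{3.9} leading to the identity
\begin{align*}
 (\pounds_{\xi}\,d\eta)(X,Y) = (\pounds_{\xi}\,g)(X,Y) + g(X,(\pounds_{\xi}\,\varphi)Y),
\end{align*}
which uses only $\Phi(X,Y)=g(X,\varphi Y)$ and the relation $\Phi=d\eta$. In the cosymplectic setting $\Phi=d\eta$ need not hold, so I expect the main obstacle is re-deriving the analogue of \eqref{3.9} using $d\Phi=0$ and $d\eta=0$ separately rather than $\Phi=d\eta$. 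The natural route is to start from the Lie-derivative of $\Phi$ instead of $d\eta$: since $\Phi$ is closed and $d$ commutes with $\pounds_{\xi}$, and since $\iota_\xi\Phi=0$ (because $\Phi(\xi,\cdot)=g(\xi,\varphi\,\cdot)=g(\varphi^{\!\top}\xi,\cdot)=0$ by $\varphi\,\xi=0$ and skew-symmetry), Cartan's formula $\pounds_\xi=d\circ\iota_\xi+\iota_\xi\circ d$ gives $\pounds_\xi\Phi=0$. Expanding $\pounds_\xi\Phi$ in terms of $\pounds_\xi g$ and $\pounds_\xi\varphi$ via $\Phi(X,Y)=g(X,\varphi Y)$ then yields an identity of the form $0=(\pounds_\xi g)(X,\varphi Y)+g(X,(\pounds_\xi\varphi)Y)$, from which the equivalence $N^{(3)}=\pounds_\xi\varphi=0 \iff \pounds_\xi g=0$ follows, using nonsingularity of $\varphi$ on $\mathcal{D}$ and that both sides annihilate $\xi$. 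The delicate point will be handling the $\xi$-direction (where $\varphi$ is singular) and confirming that $\pounds_\xi g$ vanishing on $\mathcal{D}$ together with $\nabla_\xi\xi=0$ (Proposition~\ref{prop2.1}) forces $\pounds_\xi g=0$ on all of $\mathfrak{X}_M$; I would verify $\nabla_\xi\xi=0$ holds here since \eqref{2.Q3} $\iota_\xi\,d\eta=0$ is immediate from $d\eta=0$, so Proposition~\ref{prop2.1} applies verbatim.
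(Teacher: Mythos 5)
Your proposal is correct, and for the first three claims it coincides with the paper's proof: $N^{(4)}=0$ and $N^{(2)}=0$ are read off from \eqref{2.9X} and \eqref{3.6}, and $N^{(1)}=[\varphi,\varphi]$ from \eqref{2.6X}, all using $d\eta=0$. The difference lies in the Killing equivalence, and there your treatment is an improvement on the paper's one-line argument. The paper disposes of this step by citing \eqref{3.9}; but \eqref{3.9} was obtained in the proof of Theorem~\ref{thm6.2} by substituting the weak contact condition \eqref{2.3}, $\Phi=d\eta$, into the Leibniz formula \eqref{3.8}, and that condition fails on a weak almost cosymplectic manifold (there $d\eta=0$ while $\Phi\ne0$), so the citation cannot be taken literally. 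Your route supplies the correct cosymplectic analogue: $\iota_{\,\xi}\Phi=0$ (from $\varphi\,\xi=0$ and skew-symmetry of $\varphi$) together with $d\Phi=0$ gives $\pounds_{\xi}\Phi=0$ by Cartan's formula, and expanding $\pounds_{\xi}\Phi$ by the Leibniz rule yields $0=(\pounds_{\xi}\,g)(X,\varphi Y)+g(X,(\pounds_{\xi}\,\varphi)Y)$, which is what the paper's appeal to \eqref{3.9} must be read as meaning here. Your two auxiliary points are also the right ones: surjectivity of $\varphi|_{\mathcal D}$ (from $\varphi^2=-Q$ on ${\mathcal D}$ and nonsingularity of $Q$) converts vanishing of $(\pounds_{\xi}\,g)(\cdot\,,\varphi\,\cdot)$ into vanishing of $\pounds_{\xi}\,g$ in ${\mathcal D}$-directions, and the $\xi$-direction comes for free since $(\pounds_{\xi}\,g)(X,\xi)=(\pounds_{\xi}\,\eta)(X)=N^{(4)}(X)=0$; equivalently one can use $\nabla_\xi\,\xi=0$, noting as you do that the proof of Proposition~\ref{prop2.1} needs only $\iota_{\,\xi}\,d\eta=0$, which is trivial when $d\eta=0$ (strictly, the hypotheses of that proposition do not cover the almost cosymplectic case, but its argument does --- this is exactly what the paper's corollary following the theorem redoes). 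In short: same overall strategy as the paper, but your execution of the key step repairs an imprecise cross-reference rather than merely reproducing it.
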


\begin{proof}
By \eqref{3.6} and \eqref{2.9X} and since $d\eta=0$, the tensors $N^{(2)}$ and $N^{(4)}$ vanish on a weak almost cosymplectic structure.
Moreover, from \eqref{2.6X} and \eqref{3.9}, respectively, the tensor $N^{(1)}$ coincides with the Nijenhuis tensor of $\varphi$,
and $N^{(3)}=\pounds_{\xi}\,\varphi$ vanishes if and only if $\xi$ is a Killing vector.
\end{proof}

By application of the above theorem, we make the following corollary to point out some important attributes of a weak almost cosymplectic structure.

\begin{corollary}
In any weak almost cosymplectic manifold the integral curves of $\,\xi$ are geodesics.
\end{corollary}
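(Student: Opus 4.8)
The plan is to show that $\nabla_\xi\,\xi=0$, which is exactly the statement that the integral curves of $\xi$ are geodesics. The strategy parallels Proposition~\ref{prop2.1}(ii): in that argument one deduces $\nabla_\xi\,\xi=0$ from the two facts $\pounds_\xi\,\eta=0$ and the pointwise identity $(\pounds_\xi\,\eta)(X)=g(X,\nabla_\xi\,\xi)$. I would reuse this identity here verbatim, so the whole task reduces to establishing $\pounds_\xi\,\eta=0$ on a weak almost cosymplectic manifold.

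First I would recall from Theorem~\ref{thm6.2C} that on a weak almost cosymplectic structure the tensor $N^{(4)}$ vanishes, and that by \eqref{2.9X} we have $N^{(4)}(X)=(\pounds_\xi\,\eta)(X)=2\,d\eta(\xi,X)$. Actually the vanishing here is even more immediate: by definition of weak almost cosymplectic the $1$-form $\eta$ is closed, so $d\eta=0$, whence $\iota_\xi\,d\eta=0$ directly. This is the analogue of \eqref{2.Q3}, but obtained from $d\eta=0$ rather than from $N^{(1)}=0$. Either route gives the needed input.

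Next I would invoke the Cartan formula $\pounds_\xi=d\circ\iota_\xi+\iota_\xi\circ d$ applied to $\eta$, exactly as in \eqref{2.Q2}: since $\eta(\xi)=1$ is constant we have $d(\eta(\xi))=d(1)=0$, and since $d\eta=0$ we have $\iota_\xi\,d\eta=0$; therefore
\begin{align*}
 \pounds_\xi\,\eta = d(\eta(\xi)) + \iota_\xi\,d\eta = 0.
\end{align*}
Combining this with the identity $(\pounds_\xi\,\eta)(X)=g(X,\nabla_\xi\,\xi)$ (derived as in the proof of Proposition~\ref{prop2.1}, following \cite[Theorem~4.5]{blair2010riemannian}), non-degeneracy of $g$ forces $\nabla_\xi\,\xi=0$, so the integral curves of $\xi$ are geodesics.

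I do not anticipate a serious obstacle: the argument is essentially identical to Proposition~\ref{prop2.1}(ii), and the only structural input specific to the cosymplectic setting—namely $d\eta=0$—is built into the definition and already exploited in Theorem~\ref{thm6.2C}. The one point worth checking carefully is that the pointwise identity $(\pounds_\xi\,\eta)(X)=g(X,\nabla_\xi\,\xi)$ holds in the weak setting without any appeal to $Q={\rm id}$; but this identity is purely metric (it uses only $\eta=\iota_\xi\,g$ from \eqref{2.2A} together with the compatibility of $\nabla$ with $g$ and $\nabla_X\,\xi$ being the covariant derivative), so it transfers with no modification.
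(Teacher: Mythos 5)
Your proposal is correct and follows essentially the same route as the paper: both reduce the claim to $\pounds_{\xi}\,\eta=0$ (the paper cites $N^{(4)}=0$ from Theorem~\ref{thm6.2C}, which is the same fact since $N^{(4)}=\pounds_{\xi}\,\eta$ by \eqref{2.9X}, while you also note the direct route via $d\eta=0$ and Cartan's formula), and then both use the pointwise identity $(\pounds_{\xi}\,\eta)(X)=g(X,\nabla_{\xi}\,\xi)$ --- which the paper derives inline using $\eta=\iota_{\xi}\,g$ and metric compatibility, exactly as you indicate --- to conclude $\nabla_{\xi}\,\xi=0$.
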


\begin{proof}
By Theorem~\ref{thm6.2C}, $N^{(4)}=0$; thus, from \eqref{2.9X} and $g(\nabla_{X}\xi,\,\xi)=0$ we get for any $X\in\mathfrak{X}_M$,
\begin{equation*}
 0=\xi(\eta(X)) - \eta(\pounds_{\xi}X) = \xi(\eta(X)) - g(\nabla_{\xi}X,\xi) - g(\nabla_{X}\xi,\xi) = g(X,\nabla_{\xi}\,\xi).
\end{equation*}
\end{proof}

\begin{proposition}
Let $(\varphi,Q,\xi,\eta,g)$ be a weak cosymplectic structure. Then
\begin{align}\label{6.1}
 2\,g((\nabla_{X}\,\varphi)Y,Z) &= N^{(5)}(X,Y,Z),\\
\label{6.1b}
 6\,d\Phi & = N^{(5)}(X,Y,Z) + N^{(5)}(Y,Z,X) + N^{(5)}(Z, X, Y) ,\\
\label{6.1c}
 2\,g([\varphi,\varphi](X,Y),Z) & = N^{(5)}(\varphi X,Y,Z) + N^{(5)}(\varphi Y,Z,X) + N^{(5)}(\varphi Z, X, Y).
\end{align}
\end{proposition}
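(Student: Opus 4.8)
The plan is to establish \eqref{6.1} first and then to deduce \eqref{6.1b} and \eqref{6.1c} from it together with the two ``universal'' formulas already recorded, namely the co-boundary formula \eqref{3.3} and the representation \eqref{4.NN} of the Nijenhuis torsion through $\nabla\varphi$. For \eqref{6.1} I would simply feed the defining conditions of a weak cosymplectic structure into Lemma~\ref{lem6.1}: closedness of $\Phi$ and $\eta$ gives $d\Phi=0$ and $d\eta=0$, normality gives $N^{(1)}=0$, and Theorem~\ref{thm6.2C} gives $N^{(2)}=N^{(4)}=0$. Thus every term on the right of the Lemma~\ref{lem6.1} identity except the last one vanishes, leaving $2\,g((\nabla_X\varphi)Y,Z)=N^{(5)}(X,Y,Z)$.

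For \eqref{6.1b} I would expand the co-boundary formula \eqref{3.3} for $\Phi(X,Y)=g(X,\varphi Y)$ using $\nabla g=0$ and the skew-symmetry of $\varphi$. The derivative and bracket terms collect into the cyclic sum $\mathfrak{S}_{X,Y,Z}\,g((\nabla_X\varphi)Y,Z)$, so that $3\,d\Phi(X,Y,Z)$ equals this cyclic sum, up to the overall sign fixed by the conventions in \eqref{3.3A} and \eqref{3.3}. Replacing each summand $2\,g((\nabla_X\varphi)Y,Z)$ by $N^{(5)}(X,Y,Z)$ via \eqref{6.1} then yields \eqref{6.1b}; since $d\Phi=0$ here, this step simultaneously records the constraint that the cyclic sum of $N^{(5)}$ vanishes.

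For \eqref{6.1c} I would start from \eqref{4.NN}, pair it with $Z$, and move the outer factors of $\varphi$ across $g$ by skew-symmetry; substituting \eqref{6.1} converts the four terms of \eqref{4.NN} into four $N^{(5)}$-expressions carrying a $\varphi$ in one of their first two slots. I would then reduce this raw expression to the cyclic three-term combination $N^{(5)}(\varphi X,Y,Z)+N^{(5)}(\varphi Y,Z,X)+N^{(5)}(\varphi Z,X,Y)$ using, first, the skew-symmetry of $N^{(5)}$ in its last two arguments (Lemma~\ref{lem6.1}) to pair up terms, and, second, the cyclic vanishing $\mathfrak{S}N^{(5)}=0$ from \eqref{6.1b} to clear the remaining non-cyclic piece. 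Note that $[\varphi,\varphi]=N^{(1)}=0$ in the weak cosymplectic case, so \eqref{6.1c} may equivalently be read as the vanishing of that cyclic sum.

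The derivation of \eqref{6.1} is routine; the real work lies in \eqref{6.1b} and especially \eqref{6.1c}, where the coboundary and Nijenhuis formulas produce more terms than the target. The main obstacle is to collapse these into the clean cyclic combinations: one must apply the skew-symmetry of $N^{(5)}$ in its last two slots and the cyclic identity $\mathfrak{S}N^{(5)}=0$ in precisely the right order, keeping the sign conventions of \eqref{3.3A}, \eqref{3.3} and \eqref{4.NN} tracked consistently throughout.
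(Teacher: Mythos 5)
Your proposal is correct and is essentially the paper's own proof: \eqref{6.1} follows by feeding $d\Phi=0$, $d\eta=0$, $N^{(1)}=0$ and $N^{(2)}=0$ (Theorem~\ref{thm6.2C}) into Lemma~\ref{lem6.1}; \eqref{6.1b} by expressing $3\,d\Phi$ through $\nabla\varphi$ via \eqref{3.3} and substituting \eqref{6.1}; and \eqref{6.1c} from \eqref{4.NN} together with the skew-symmetry of $\varphi$ and of $N^{(5)}$ in its last two slots plus the cyclic relation supplied by \eqref{6.1b} (the paper words this last step as ``\eqref{6.1b} with $X$ replaced by $\varphi X$''). Your observation that, since $d\Phi=0$ and $[\varphi,\varphi]=0$ here, \eqref{6.1b} and \eqref{6.1c} really record vanishing statements for cyclic sums of $N^{(5)}$ is implicit in the paper's argument as well, so the two proofs differ only in presentation.
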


\begin{proof}
For a weak almost cosymplectic structure $(\varphi,Q,\xi,\eta,g)$, we get
\begin{equation}\label{6.1a}
 2\,g((\nabla_{X}\,\varphi)Y,Z)= g([\varphi,\varphi](Y,Z),\varphi X) + N^{(5)}(X,Y,Z).
\end{equation}
From \eqref{6.1a}, using condition $[\varphi,\varphi]=0$ we get \eqref{6.1}.
Using \eqref{3.3} and \eqref{6.1}, we can write
\[
  3\,d\Phi  = g((\nabla_X\,\varphi)Z,Y) +g((\nabla_Y\,\varphi)X, Z) +g((\nabla_Z\,\varphi)Y, X),
\]
hence, \eqref{6.1b}.
Using \eqref{4.NN}, \eqref{6.1} and the skew-symmetry of $\varphi$, we obtain
\[
 2\,g([\varphi,\varphi](X,Y),Z) = N^{(5)}(X, Y, \varphi Z) {+} N^{(5)}(\varphi X, Y, Z) {-} N^{(5)}(Y, X, \varphi Z) {-} N^{(5)}(\varphi Y,X,Z) .
\]
This and \eqref{6.1b} with $X$ replaced by $\varphi X$ provide \eqref{6.1c}.
\end{proof}

\begin{remark}\rm
For a weak cosymplectic structure, using \eqref{6.1}, we obtain (compare with \eqref{E-30})
\begin{align*}
 2\,g(Q(\nabla_{X}\,\xi),\,Z) = -N^{(5)}(X,\xi,\varphi Z).
\end{align*}
\end{remark}

Recall that an almost contact metric structure $(\varphi,\xi,\eta,g)$ is cosymplectic if and only if $\varphi$ is parallel, e.g., \cite[Theorem~6.8]{blair2010riemannian}.
The following our theorem completes this result.

\begin{theorem}\label{thm6.2D}
Any weak almost contact structure $(\varphi,Q,\xi,\eta,g)$ with the property $\nabla\varphi=0$
is a~weak cosymplectic structure with vanishing tensor $N^{(5)}$.
\end{theorem}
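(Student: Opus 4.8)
The plan is to verify the two assertions separately: that the structure is weak cosymplectic (which by the Definition requires $d\Phi=0$, $d\eta=0$, and normality $N^{(1)}=0$), and that $N^{(5)}$ vanishes. The hypothesis $\nabla\varphi=0$ makes each of these nearly immediate once the right formula already derived in the excerpt is invoked; the one nontrivial point is extracting $\nabla\xi=0$.

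First I would establish closedness of $\Phi$. Since the Levi-Civita connection satisfies $\nabla g=0$ and $\Phi(Y,Z)=g(Y,\varphi Z)$, the hypothesis $\nabla\varphi=0$ gives $\nabla\Phi=0$ by a one-line computation. As the exterior derivative of a parallel form is the skew-symmetrization of $\nabla\Phi$, this yields $d\Phi=0$. For the Nijenhuis torsion I would substitute $\nabla\varphi=0$ into the expression \eqref{4.NN} of $[\varphi,\varphi]$ in terms of $\nabla\varphi$; every summand there carries a factor of the form $\nabla\varphi$, so $[\varphi,\varphi]=0$.

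The step I expect to be the crux is showing $d\eta=0$, which I would obtain by first proving $\nabla\xi=0$. Differentiating $\varphi\,\xi=0$ and using $\nabla\varphi=0$ gives $\varphi(\nabla_X\,\xi)=0$, so $\nabla_X\,\xi$ lies in $\ker\varphi$; since $\mathrm{rank}\,\varphi=2n$ (first Proposition), $\ker\varphi=\mathbb{R}\,\xi$, whence $\nabla_X\,\xi=c(X)\,\xi$ for some one-form $c$. Because $g(\xi,\xi)=\eta(\xi)=1$ by \eqref{2.2A}, we have $g(\nabla_X\,\xi,\xi)=\tfrac12\,X(g(\xi,\xi))=0$, which forces $c(X)=0$ and hence $\nabla\xi=0$. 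Then \eqref{E-d-eta} gives $2\,d\eta(X,Y)=g(\nabla_X\,\xi,Y)-g(\nabla_Y\,\xi,X)=0$. Combining $[\varphi,\varphi]=0$ with $d\eta=0$ in the definition \eqref{2.6X} yields $N^{(1)}=0$, so the structure is normal; together with $d\Phi=0$ and $d\eta=0$ it is weak cosymplectic.

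Finally, having established that the structure is weak cosymplectic, I would apply \eqref{6.1}, namely $2\,g((\nabla_{X}\,\varphi)Y,Z)=N^{(5)}(X,Y,Z)$; the hypothesis $\nabla\varphi=0$ then immediately gives $N^{(5)}=0$. As an alternative one could substitute $\nabla\xi=0$ and $\nabla\varphi=0$ directly into the defining formula for $N^{(5)}$ in Lemma~\ref{lem6.1}, but routing through \eqref{6.1} is cleaner. The only genuine obstacle is the kernel argument for $\nabla\xi=0$; everything else reduces to feeding the parallelism hypothesis into identities already proved in the excerpt.
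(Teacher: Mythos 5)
Your proof is correct, and it takes a genuinely different route from the paper at the one step you correctly identified as the crux. Both arguments open the same way ($[\varphi,\varphi]=0$ from \eqref{4.NN}, $d\Phi=0$ from parallelism of $\Phi$) and close the same way ($N^{(5)}=0$ by feeding $\nabla\varphi=0$ into \eqref{6.1} once the structure is known to be weak cosymplectic). The difference is how $d\eta=0$ and normality are obtained. You prove the stronger statement $\nabla\xi=0$: differentiating $\varphi\,\xi=0$ gives $\varphi(\nabla_X\,\xi)=0$, the rank computation of the first Proposition gives $\ker\varphi=\mathbb{R}\,\xi$ pointwise, and metric compatibility kills the $\xi$-component; then \eqref{E-d-eta} gives $d\eta=0$ outright. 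The paper instead stays inside its tensor machinery: from \eqref{4.NNxi} it extracts $\nabla_{\varphi X}\,\xi=\varphi\,\nabla_X\,\xi$, computes $N^{(2)}(Y,\xi)=0$, sets $Z=\xi$ in Lemma~\ref{lem6.1} to get $0=d\eta(\varphi Y,X)-N^{(5)}(X,\xi,Y)$, and then shows $N^{(5)}(X,\xi,Y)=0$ via \eqref{KK}, concluding $d\eta=0$. Your route is more elementary (it uses only the rank statement and $\nabla g=0$, not the covariant-derivative formula of Lemma~\ref{lem6.1}) and yields a stronger intermediate fact --- $\xi$ is parallel, hence Killing with geodesic integral curves --- and it also disposes of the $\xi$-direction of $d\eta$ automatically, whereas the paper's identity $d\eta(\varphi Y,X)=0$ directly controls $d\eta$ only on the image of $\varphi$, i.e.\ on $\mathcal D$. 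What the paper's approach buys is uniformity: it exercises the tensors $N^{(1)},\dots,N^{(5)}$ and the formula of Lemma~\ref{lem6.1} that the rest of the article is built on, at the cost of a longer chain of substitutions.
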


\begin{proof}
Using condition $\nabla\varphi=0$, from \eqref{4.NN} we obtain $[\varphi,\varphi]=0$.
Hence, from \eqref{2.6X} we get $N^{(1)}(X,Y)=2\,d\eta(X,Y) Q\xi$, and from \eqref{4.NNxi} we obtain
\begin{align}\label{E-cond1}
 \nabla_{\varphi X}\,\xi - \varphi\,\nabla_{X}\,\xi = 0,\quad X\in \mathfrak{X}_M.
\end{align}
From \eqref{3.3}, we calculate
\[
 3\,d\Phi(X,Y,Z) = g((\nabla_{X}\,\varphi)Z, Y) + g((\nabla_{Y}\,\varphi)X,Z) + g((\nabla_{Z}\,\varphi)Y,X);
\]
hence, using condition $\nabla\varphi=0$ again, we get $d\Phi=0$. Next,
\begin{align*}
 N^{(2)}(Y,\xi) = -\eta([\varphi Y,\xi]) = g(\xi, \varphi\nabla_\xi Y) =0.
\end{align*}
Thus, setting $Z=\xi$ in Lemma~\ref{thm6.1} and using the condition $\nabla\varphi=0$ and the properties
$d\Phi=0$, $N^{(2)}(Y,\xi)=0$ and $N^{(1)}(X,Y)=2\,d\eta(X,Y) Q\xi$, we find $0 = d\eta(\varphi Y, X) - N^{(5)}(X,\xi, Y)$.
 By \eqref{KK} and \eqref{E-cond1}, we get
\[
 N^{(5)}(X,\xi, Y) = g([\xi,\varphi Y]^\top -\varphi[\xi,Y],\, \tilde Q X)
 = g(\nabla_{\varphi Y}\,\xi - \varphi\,\nabla_{Y}\,\xi,\, \tilde Q X) = 0;
\]
hence, $d\eta=0$. By the above, $N^{(1)}=0$.
Thus, $(\varphi,Q,\xi,\eta,g)$ is a weak cosymplectic structure.
Finally, from \eqref{6.1} and condition $\nabla\varphi=0$ we get $N^{(5)}=0$.
\end{proof}

\begin{example}\rm
Let $M$ be a $2n$-dimensional smooth manifold and $\tilde\varphi:TM\to TM$ an endomorphism of rank $2n$ such that
$\nabla\tilde\varphi=0$.
To construct a weak cosymplectic structure on $M\times \mathbb{R}$ or $M\times S^1$, take any point $(x, t)$ of either
space and set $\xi = (0, d/dt)$, $\eta =(0, dt)$~and
\[
 \varphi(X, Y) = (\tilde\varphi X, 0),\quad
 Q(X, Y) = (-\tilde\varphi^2 X, \nu\,\xi).
\]
where $X\in M_x$, $Y\in \{\mathbb{R}_t$ or $S^1_t\}$ and $\nu\in\mathbb{R}_+$.
Then \eqref{2.1} holds and Theorem~\ref{thm6.2D} can be applied.
\end{example}

\section{Weak contact vector fields}
\label{sec:5}

Contact vector fields (and contact infinitesimal transformations) is a fruitful tool in contact manifold geometry.
Here, we are based on some classical results, e.g., \cite[Section~5.2]{blair2010riemannian}.

\begin{definition}\rm
 A vector field $X$ on a weak contact metric manifold $M^{2n+1}(\varphi,Q,\xi,\eta)$ is called
 a {\it weak contact vector field}
(or, a {\it weak contact infinitesimal transformation}), if
the flow of $X$ preserves $\xi$, i.e., there exists a smooth function $\sigma : M \rightarrow \mathbb{R}$ such that
\begin{align}\label{7.1}
\pounds_{X}\,\eta=\sigma\,\eta,
\end{align}
and if $\sigma=0$, then the vector field $X$ is said to be \textit{strict weak contact vector field}.
\end{definition}

The following our result generalizes \cite[Theorem~5.7]{blair2010riemannian}.

\begin{theorem}
A vector field $X$ on a weak contact metric manifold $M^{2n+1}(\varphi,Q,\xi,\eta,g)$ is a weak contact infinitesimal transformation
if and only if there exists a function $f$ on $M$ such that
\begin{align}\label{7.1b}
 QX = -\frac{1}{2}\,\varphi\,\nabla f +\nu\,f\,\xi.
\end{align}
Moreover, a weak contact vector field $X$ is strict if and only if $\,\xi(f)$.
\end{theorem}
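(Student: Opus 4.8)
The plan is to characterize the weak contact infinitesimal transformation condition \eqref{7.1} by unpacking it through the fundamental relation $\Phi = d\eta$ and the compatibility equation \eqref{2.2}, with the key twist being the presence of the tensor $Q$ compared to the classical case. First I would apply the Cartan formula $\pounds_X = \iota_X\circ d + d\circ\iota_X$ to $\eta$, obtaining $\pounds_X\,\eta = \iota_X\,d\eta + d(\eta(X))$. Using $\Phi = d\eta$ and $\Phi(X,Y)=g(X,\varphi Y)$ from the weak contact metric structure, the interior-product term becomes $(\iota_X\,d\eta)(Y) = d\eta(X,Y) = g(X,\varphi Y) = -g(\varphi X, Y)$. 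Setting $f := \eta(X) = g(X,\xi)$ (by \eqref{2.2A}), the exterior-derivative term is $d(\eta(X))(Y) = Y(f) = g(\nabla f, Y)$. Hence $(\pounds_X\,\eta)(Y) = g(\nabla f - \varphi X, Y)$ for all $Y$, so the contact condition \eqref{7.1}, namely $\pounds_X\,\eta = \sigma\,\eta$, is equivalent to the vector identity $\nabla f - \varphi X = \sigma\,\xi$.

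Next I would solve this identity for $X$ in terms of $f$, which is where the tensor $Q$ enters nontrivially. Rearranging gives $\varphi X = \nabla f - \sigma\,\xi$. Applying $\varphi$ to both sides and using \eqref{2.1} together with $\varphi\,\xi = 0$ (Proposition~(a)) yields $\varphi^2 X = -QX + \eta(X)\,Q\,\xi = \varphi(\nabla f)$, so that $QX = \eta(X)\,Q\,\xi - \varphi(\nabla f) = \nu\,f\,\xi - \varphi\,\nabla f$, using \eqref{2.1Q-nu} and $\eta(X)=f$. To recover the stated form \eqref{7.1b} with the factor $\tfrac12$, I expect the normalization $f := 2\,\eta(X)$ (or equivalently a factor absorbed into the definition of $f$) is intended, matching the classical convention in \cite[Theorem~5.7]{blair2010riemannian}; I would fix this scaling at the outset so that the computation lands precisely on $QX = -\tfrac12\,\varphi\,\nabla f + \nu\,f\,\xi$. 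The converse direction is then a direct verification: given \eqref{7.1b}, one applies $\varphi$, uses $[Q,\varphi]=0$ and $\varphi\,\xi=0$ to reconstruct $\varphi X$, reinserts it into $(\pounds_X\,\eta)(Y) = g(\nabla f - \varphi X, Y)$, and reads off that $\pounds_X\,\eta$ is a multiple of $\eta$.

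The main obstacle I anticipate is bookkeeping the role of $Q$ and $\nu$ correctly, since in the classical case $Q = \mathrm{id}$ and $\nu = 1$, and every step must reduce to the known result there. Specifically, inverting the relation $\varphi X = \nabla f - \sigma\,\xi$ requires care because $\varphi$ is not invertible (it annihilates $\xi$), so I must track the $\xi$-component separately: pairing the identity $\nabla f - \varphi X = \sigma\,\xi$ with $\xi$ and using $g(\varphi X,\xi) = -g(X,\varphi\,\xi) = 0$ gives $\xi(f) = \sigma$, which simultaneously identifies the proportionality function $\sigma$ and delivers the final claim. Indeed, the strictness condition $\sigma = 0$ is then equivalent to $\xi(f) = 0$, establishing the last sentence of the theorem; I would note that the statement as printed reads ``$\xi(f)$'' and should be completed to ``$\xi(f)=0$''.
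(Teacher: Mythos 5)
Your overall strategy (unpack $\pounds_{X}\,\eta$ through $d\eta$, use $\Phi=d\eta$ and $f=\eta(X)$, apply $\varphi$ together with \eqref{2.1}, and pair with $\xi$ to identify $\sigma=\xi(f)$) is the same as the paper's, but your key identity contains a genuine error coming from a convention clash, and your proposed repair does not work. The paper defines $d\eta$ with the factor $\tfrac12$, see \eqref{3.3A}, and the weak contact condition \eqref{2.3} reads $d\eta(X,Y)=g(X,\varphi Y)$ with respect to \emph{this} convention. With this convention, the Cartan formula in the form $\pounds_{X}\,\eta=\iota_{X}\,d\eta+d(\eta(X))$ is false; directly from \eqref{3.3A} one has instead
\[
 (\pounds_{X}\,\eta)(Y)=X(\eta(Y))-\eta([X,Y])=2\,d\eta(X,Y)+Y(\eta(X)),
\]
with a factor $2$ in front of $d\eta$. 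Consequently the identity you derive, $\nabla f-\varphi X=\sigma\,\xi$ with $f=\eta(X)$, is wrong; the correct one (the paper's \eqref{7.2b}) is $\nabla f-2\,\varphi X=\sigma\,\xi$. Applying $\varphi$ to your identity yields $QX=-\varphi\,\nabla f+\nu f\,\xi$, which is not \eqref{7.1b}.

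Your patch (redefine $f:=2\,\eta(X)$) cannot rescue this, because rescaling $f$ changes the two coefficients in a coupled way: from $\varphi X=\tfrac12\nabla f-\sigma\,\xi$ one gets, after applying $\varphi$ and using \eqref{2.1} and \eqref{2.1Q-nu}, $QX=-\tfrac12\,\varphi\,\nabla f+\nu\,\eta(X)\,\xi=-\tfrac12\,\varphi\,\nabla f+\tfrac{\nu}{2}\,f\,\xi$, with coefficient $\nu/2$ instead of $\nu$. The factor $\tfrac12$ in \eqref{7.1b} is not a normalization that can be absorbed into $f$; it is forced by the factor $2$ in $(\pounds_{X}\,\eta)(Y)=2\,d\eta(X,Y)+Y(f)$, and the theorem holds precisely with $f=\eta(X)$. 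Separately, your converse is too thin for the weak setting: after recovering $\nabla f-2\,\varphi X=\xi(f)\,\xi$ from \eqref{7.1b} (via non-degeneracy of $Q$ and $[Q,\varphi]=0$), one must still compute $2\,d\eta(X,Y)$ by splitting $X=QX-\tilde QX$ and use \eqref{2.2} to handle the $\tilde Q$-terms, which contribute $-(\nu-1)\,\xi(f)\,\eta(Y)$ and cancel against part of the $Q$-term to leave exactly $\xi(f)\,\eta(Y)$; this bookkeeping is the actual content of the converse here and is absent from your sketch. Your identification $\sigma=\xi(f)$ and your observation that the strictness claim should read ``$\xi(f)=0$'' are correct.
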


\begin{proof}
One can explicitly write $(\pounds_{X}\,\eta)(Y)=X(\eta(Y)) - \eta([X,Y])$. Using \eqref{3.3A}, we rewrite \eqref{7.1} as
\begin{align}\label{7.2}
 2\,d\eta(X,Y)+Y(f)=\sigma\,\eta(Y),
\end{align}
where $f=\eta(X)$. Using \eqref{2.3} in \eqref{7.2}, we get
$-2\,g(\varphi X,Y)+Y(f)=\sigma\,\eta(Y)$,
which provides
\begin{align}\label{7.2b}
 -2\,\varphi X+\nabla f=\sigma\,\xi .
\end{align}
Applying $\varphi$ and using \eqref{2.1}, we obtain \eqref{7.1b}.
Multiplying \eqref{7.2b} by $\xi$ gives $\sigma=\xi(f)$.

Conversely, assume the condition \eqref{7.1b}.
The scalar product of \eqref{7.1b} with $\xi$ gives
\[
 \nu\,\eta(X)=g(Q\,\xi, X) = \nu\,f;
\]
therefore, $f=\eta(X)$.
Multiplying \eqref{7.1b} by $\varphi$ and using \eqref{2.1}, gives
\[
 Q\,\varphi X =-\frac12\,\varphi^2(\nabla f) = \frac12\,Q\,\nabla f - \frac12\,\eta(\nabla f)\,Q\,\xi,
\]
then, using non-degeneracy of $Q$, we get
\begin{align*}
 \nabla f - 2\,\varphi X = \eta(\nabla f)\,\xi = \xi(f)\,\xi.
\end{align*}
Thus, using \eqref{2.2}, \eqref{2.3} and \eqref{3.3A}, one can calculate
\begin{align*}
(\pounds_{X}\,\eta)(Y)&=2\,d\eta(X,Y)+Y(\eta(X)) = 2\,d\eta(QX-\tilde QX,Y) + Y(f)\\
&= 2\,g(-\frac{1}{2}\,\varphi \nabla f +f\,\nu\,\xi,\varphi Y) + Y(f) -2\,g(\tilde QX,\varphi Y)\\
&= -g(\varphi \nabla f,\varphi Y) + Y(f) -2\,g(\tilde QX,\varphi Y)\\
&= -(QY)(f)+\nu\,\xi(f)\,\eta(Y) + Y(f) -2\,g(\tilde QX,\varphi Y)\\
&= \nu\,\xi(f)\,\eta(Y) -g(\tilde Q(\nabla f - 2\,\varphi X), Y)\\
&= \nu\,\xi(f)\,\eta(Y) -(\nu-1)\,\xi(f)\,\eta(Y) = \xi(f)\,\eta(Y).
\end{align*}
By this, \eqref{7.1} with $\sigma = \xi(f)$ is valid.
\end{proof}

\end{document}